\newtheorem{thm}{Theorem}[section]
\newtheorem{lm}[thm]{Lemma}
\newtheorem{co}[thm]{Corollary}
\newtheorem{pr}[thm]{Proposition}
\theoremstyle{definition}
\newtheorem{rem}[thm]{Remark}
\numberwithin{equation}{section}
\newcommand{\ptn}{\mathbin{\widehat{\otimes}}}
\newcommand{\CC}{\mathbb{C}}
\newcommand{\R}{\mathbb{R}}
\newcommand{\Z}{\mathbb{Z}}
\newcommand{\N}{\mathbb{N}}
\newcommand*{\DD}{\mathbb D}
\newcommand{\B}{{\mathrm{B}}}
\newcommand*{\rT}{\mathrm T}
\newcommand{\cO}{\mathcal{O}}
\newcommand*{\cR}{\mathcal R}
\newcommand*{\fg}{\mathfrak{g}}
\newcommand*{\wh}{\widehat}
\newcommand*{\wt}{\widetilde}
\renewcommand{\le}{\leqslant}
\renewcommand{\ge}{\geqslant}
\let \al         =\alpha
\let \be         =\beta
\let \ga         =\gamma
\let \te         =\theta        
\let \io         =\iota
\let \la         =\lambda
\let \om         =\omega
\let \Ga         =\Gamma
\let \Om         =\Omega
\let \phi         =\varphi
\title{On two versions of holomorphic quantum plane}
\author{O. Yu. Aristov}
\address{Institute for Advanced Study in Mathematics of Harbin Institute of Technology, Harbin 150001, China;
\newline\indent
Suzhou Research Institute of Harbin Institute of Technology, Suzhou 215104, China}
\email{aristovoyu@inbox.ru}
\begin{document}
\begin{abstract}
We find power series descriptions of two versions of holomorphic quantum plane, the Arens--Michael envelope and the envelope with respect to the class of Banach PI algebras, in the case of non-unitary parameter.
\end{abstract}

 \maketitle

\section*{Introduction}
We consider two completions of the universal complex associative algebra with generators $x$ and $y$ satisfying the relation $xy=qyx$ with a complex parameter $q$ (Manin's quantum plane). The first completion is the envelope with respect to the class of all Banach algebras (the Arens--Michael envelope) and the second is  the envelope with respect to the class of Banach algebras satisfying a polynomial identity (PI algebras). We denote them by $\cO(\CC^2_q)$ and $\cO(\CC^2_q)^{\mathrm{PI}}$, respectively. Both algebras deserve the name 'holomorphic quantum planes'.

It seems that the first to study analytical versions of quantum affine spaces (in particular, quantum planes) was Pirkovskii \cite{Pir_qfree} in 2008. Interest in this topic has been revived lately; see \cite{ArNew,Do24a,Do24,Do24b}. On the other hand, the class $\mathsf{PI}$ of Banach PI algebras have been studied but not very actively; see \cite{Kr87,Mu94}. Recently, however, it was discovered that this area is connected to non-commutative geometry; see the papers of the author \cite{ArOld,Ar22,ArPiLie,Ar_smash,ArNew,ArNew2}. Specifically,  envelopes with respect to $\mathsf{PI}$ were introduced in \cite{Ar_smash}. For classical algebras arising in non-commutative algebraic geometry, such envelopes can be, along with Arens--Michael envelopes, treated as objects of study in non-commutative complex-analytic geometry. Furthermore, envelopes with respect to $\mathsf{PI}$ are  often easier to work with  and have a simpler structure. This feature is demonstrated in this paper by using quantum planes as an example.

Our main aim is to find power series representations of $\cO(\CC^2_q)$ and $\cO(\CC^2_q)^{\mathrm{PI}}$. Note that the following description of elements  of $\cO(\CC^2_q)$ as series in powers of $x$ and $y$ is given in \cite[Corollary 5.14]{Pir_qfree}:
$$
\cO(\CC^2_q)=\Bigl\{a=\sum_{i,j=0}^\infty
\al_{ij} y^i x^j\!:\,
 \sum_{i,j=0}^\infty |\al_{ij}|\, r^{i+j}<\infty \;\;\forall r>0\Bigr\}.
$$
(Here $|q|\le 1$. The case when $|q|>1$  can be easily reduced to this one by transposing $x$ and $y$.) We are not able to add something new to this picture for $|q|=1$. But in the case when $|q|<1$, we show that both $\cO(\CC^2_q)$ and $\cO(\CC^2_q)^{\mathrm{PI}}$ can be written in a more structural form with the use also of powers of the product $u=xy$. (Note that the difference between the cases of $|q|=1$ and $|q|\ne 1$ naturally arises in the study of other holomorphic quantum algebras; see \cite{ArDJ}.) Specifically, as locally convex spaces,
$$
\cO(\CC^2_q)\cong\cO(\Om)\ptn\mathfrak{B}_{|q|^{1/2}}\quad\text{and}\quad
\cO(\CC^2_q)^{\mathrm{PI}}\cong\cO(\Om)\ptn\CC[[u]],
$$
where $\ptn$ stands for the complete projective tensor product of locally convex spaces, $\cO(\Om)$ for the algebra of holomorphic functions on the Gelfand spectrum, $\CC[[u]]$ for the algebra of all formal power series in $u$, and $\mathfrak{B}_{|q|^{1/2}}$ for an algebra of formal power series with a certain restriction on growth.

Our approach is motivated by a study of a $C^\infty$-version of the quantum plane in \cite[\S\,4]{ArNew}. Each of the concomitant Banach algebra in the  $C^\infty$-version automatically satisfies to a polynomial identity, and so it is not surprising that we can use the method developed in \cite{ArNew} for the algebra $\cO(\CC^2_q)^{\mathrm{PI}}$. (Note that in the PI-variant the only restriction we require is that $q\ne 1$.) Indeed, we demonstrate that the $C^\infty$-argument can be applied in this situation. However, it is remarkable that it can also be modified for the more involved case of $\cO(\CC^2_q)$.

 \subsection*{Acknowledgments} 
A part of this work was done during a visit to the HSE University (Moscow) in the winter  of 2025. I wish to thank this university 
 for the hospitality.

\section{Preliminaries and statement of results}

In this text, we consider only unital associative algebras over the field $\CC$, except for Remark~\ref{Lieni}, which mentions Lie algebras.

Recall that a topological algebra is called an \emph{Arens--Michael algebra} if it is complete and isomorphic to a projective limit of Banach algebras. Denote by $\mathsf{PI}$ the class of Banach algebras satisfying a polynomial identity.  (We call them Banach PI algebras.) Following \cite[Definition 5.4]{Ar_smash}  we say that a topological algebra is a \emph{locally in $\mathsf{PI}$} if it is isomorphic to a projective limit of algebras contained in $\mathsf{PI}$. The term \emph{locally BPI algebra} is also used; see  \cite[Definition 1.1]{ArNew} for a more general context.

Recall that an \emph{Arens--Michael envelope of an associative algebra $A$} is a pair $(\wh A,\io)$, where $\wh A$ is an Arens--Michael algebra and $\io$ is a homomorphism $A \to \wh A$ such that for every Banach algebra (equivalently,  Arens--Michael algebra) $B$ and every homomorphism $\phi\!: A \to B$ there is a unique continuous homomorphism
$\widehat\phi\!:\wh A\to B$ making the diagram
\begin{equation*}
  \xymatrix{
A \ar[r]^{\io}\ar[rd]_{\phi}&\wh A\ar@{-->}[d]^{\widehat\phi}\\
 &B\\
 }
\end{equation*}
commutative.

In \cite{ArNew} this definition  has been generalised to an arbitrary class of Banach algebras. In particular, we can take $\mathsf{PI}$; see \cite{Ar_smash}. More specifically, an \emph{envelope of an algebra $A$ with respect to the class $\mathsf{PI}$}  is a pair $(\wh A^{\,\mathsf{PI}},\io)$, where is $\wh A^{\,\mathsf{PI}}$ is locally in  $\mathsf{PI}$ and $\io$ is a homomorphism $A \to \wh A^{\,\mathsf{PI}}$ that satisfies the same universal property but in the class of Banach PI algebras (equivalently, locally BPI algebras); see \cite[Definition 5.4]{Ar_smash} and  \cite[Definition 1.2]{ArNew}. Note that  Arens--Michael envelopes and envelopes with respect to $\mathsf{PI}$ always exist; see \cite[Exercise V.2.24]{He93}  and \cite[Proposition 5.7]{Ar_smash}, respectively; cf. \cite[Proposition 1.4]{ArNew}.

For $q\in\CC$ denote  by $\mathcal{R}(\CC^2_q)$ the universal complex associative algebra generated by $x$ and $y$ subject to relation $xy=qyx$.
First we write $\cR(\CC^2_q)$ in a form  that will be convenient for later use. Put
\begin{align}
\label{cROm}\mathcal{R}(\Om)\!&:=\{(f,g)\in \CC[t]\times \CC[t]\!:\,f(0)=g(0)\};\\
\label{cOOm}\cO(\Om)\!&:=\{(f,g)\in \cO(\CC)\times \cO(\CC)\!:\,f(0)=g(0)\}.
\end{align}
Note that $\mathcal{R}(\Om)$  and $\cO(\Om)$ can be identified with the quotients of $\CC[x,y]$ and $\cO(\CC^2)$, respectively, by the ideals generated by $xy$. (In the second case the ideal is automatically closed.) So $\Om$ can be identified with the Gelfand spectrum (the set of one-dimensional representations) of $\mathcal{R}(\Om)$ and similarly for $\cO(\Om)$.

Denote the pairs $(t, 0)$ and $(0, t)$ in $\mathcal{R}(\Om)$ by $X$ and $Y$ and put $u=xy$. Then $\cR(\CC^2_q)$ can be identified with $\cR(\Om)\otimes \CC[u]$ via the linear isomorphism
\begin{equation}\label{Riso}
 x^i u^j\mapsto X^i\otimes u^j,\quad y^i u^j\mapsto Y^i\otimes u^j,\quad u^j\mapsto 1\otimes u^j.
\end{equation}
So we can assume that $\cR(\CC^2_q)$ coincides with $\mathcal{R}(\Om)\otimes \CC[u]$ endowed with the multiplication determined by the relations $XY = qYX$ and $XY=u$.

\subsection*{Statement of main results}

For $s\in(0,1)$ denote by $\mathfrak{B}_s$ the universal  Arens--Michael algebra  generated topologically by a single element $u$ satisfying the condition
\begin{equation}\label{xyest}
\|u^n\|^{1/n}=O(s^n)\quad \text{as $n\to\infty$}
\end{equation}
for rach continuous submultiplicative seminorm $\|\cdot\|$.
(The existence of such an algebra is proved in Corollary~\ref{PexunCom} below.)  The importance of  $\mathfrak{B}_s$ for our problem stems from the fact that, in a  Banach algebra, the relation $xy=qyx$ with $|q|<1$ implies that~\eqref{xyest} holds with $u=xy$ and $s=|q|^{1/2}$; see Lemma~\ref{sile}. The algebra $\CC[[z]]$, consisting of all formal power series, is also universal, now with respect to the condition that $z$ is nilpotent; see Lemma~\ref{forseup}.

Using the linear isomorphism $\cR(\CC^2_q)\cong \cR(\Om)\otimes \CC[u]$ described above, we can treat $\cR(\CC^2_q)$ as a vector subspace of both $\cO(\Om)\ptn\mathfrak{B}_{|q|^{1/2}}$ and $\cO(\Om)\ptn\CC[[u]]$. Consider the corresponding embeddings,
\begin{equation}\label{iodef}
\io_1\!:\cR(\CC^2_q)\to \cO(\Om)\ptn\mathfrak{B}_{|q|^{1/2}}\quad\text{and}\quad \io_2\!:\cR(\CC^2_q)\to \cO(\Om)\ptn\CC[[u]].
\end{equation}

The following two theorems are our main results.

\begin{thm}\label{AMqupl}
Let $q\in\CC\setminus\{0\}$ and $|q|< 1$.

\emph{(A)}~The multiplication in $\mathcal{R}(\Om)\otimes \CC[u]$ can be extended to a continuous operation on $\cO(\Om)\ptn\mathfrak{B}_{|q|^{1/2}}$ that turns it into an Arens--Michael algebra.

\emph{(B)}~Taking $\cO(\Om)\ptn\mathfrak{B}_{|q|^{1/2}}$  with this multiplication, the embedding
$$
\io_1\!:\mathcal{R}(\CC^2_q)\to \cO(\Om)\ptn\mathfrak{B}_{|q|^{1/2}}
$$
is an Arens--Michael enveloping homomorphism, i.e., $\cO(\CC^2_q)\cong\cO(\Om)\ptn\mathfrak{B}_{|q|^{1/2}}$.
\end{thm}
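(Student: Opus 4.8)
The plan is to prove parts (A) and (B) together by exhibiting $\cO(\Om)\ptn\mathfrak{B}_{|q|^{1/2}}$ as a concrete Arens--Michael completion of $\cR(\CC^2_q)$ in which the universal property can be checked directly. First I would fix a family of submultiplicative seminorms generating the topology. On $\cR(\CC^2_q)=\cR(\Om)\otimes\CC[u]$ write a general element in the normal form $\sum_{i,j} \al_{ij}X^iu^j+\sum_{i,j}\be_{ij}Y^iu^j+\sum_j\ga_j u^j$ coming from \eqref{Riso}, and for each pair $(\rho,\si)$ with $\rho>0$ and $\si>|q|^{1/2}$ define a weighted $\ell^1$-type norm $\|\cdot\|_{\rho,\si}$ that weights $X^iu^j$ and $Y^iu^j$ by $\rho^{i}\si^{2j}$ (and similarly $u^j$ by $\si^{2j}$; note $u=XY$ carries the exponent $2$ hidden in $\si^{2j}$ because of \eqref{xyest}). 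The first technical step is to verify that these are submultiplicative for the multiplication defined by $XY=qYX$, $XY=u$: the only nontrivial check is the mixed product $(Y^au^b)(X^cu^d)$, which after using $xy=qyx$ becomes a sum of monomials $q^{?}X^{c-k}Y^{a-k}u^{b+d+k}$, and here the factor $s^{2}=|q|$ absorbed in each extra power of $u$ exactly compensates the $|q|$-powers produced by the commutation, so submultiplicativity holds once $\si\ge|q|^{1/2}$; this is essentially Lemma~\ref{sile} repackaged. Completing $\cR(\CC^2_q)$ in the seminorms $\{\|\cdot\|_{\rho,\si}\}$ then gives a well-defined multiplication on the completion, and identifying that completion as a locally convex space with $\cO(\Om)\ptn\mathfrak{B}_{|q|^{1/2}}$ is a matter of matching the defining growth conditions: the $\rho^i$-weights for all $\rho>0$ give $\cO(\CC)\times\cO(\CC)$ with the $f(0)=g(0)$ gluing, i.e.\ $\cO(\Om)$, while the $\si^{2j}$-weights for all $\si>|q|^{1/2}$ give precisely $\mathfrak{B}_{|q|^{1/2}}$ by its universal description \eqref{xyest}. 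That settles (A).

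For (B) I would use the universal property. Let $B$ be a Banach algebra and $\phi\!:\cR(\CC^2_q)\to B$ a homomorphism; set $X_0=\phi(X)$, $Y_0=\phi(Y)$, $u_0=\phi(u)=X_0Y_0$. Since $B$ is Banach, $X_0$ and $Y_0$ are power-bounded-scaled in the sense that $\|X_0^n\|^{1/n}$ and $\|Y_0^n\|^{1/n}$ are finite, so the one-variable subalgebras generated by $X_0$ and by $Y_0$ factor through $\cO(\CC)$; and by Lemma~\ref{sile} the element $u_0=X_0Y_0$ satisfies $\|u_0^n\|^{1/n}=O(|q|^{n/2})$, hence by the universal property of $\mathfrak{B}_{|q|^{1/2}}$ the subalgebra it generates factors through $\mathfrak{B}_{|q|^{1/2}}$. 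Assembling these, one checks that $\phi$ extends continuously along $\io_1$ to a homomorphism $\wh\phi\!:\cO(\Om)\ptn\mathfrak{B}_{|q|^{1/2}}\to B$: concretely, each $\|\cdot\|_{\rho,\si}$ above dominates (a constant times) the operator norm of the image once $\rho$ and $\si$ are chosen larger than the relevant spectral-radius-type quantities of $X_0$, $Y_0$, $u_0$. Uniqueness of $\wh\phi$ is immediate because $\io_1(\cR(\CC^2_q))$ is dense. Combined with (A) and the standard fact that a dense-range homomorphism into an Arens--Michael algebra satisfying this universal property is an Arens--Michael envelope, this yields $\cO(\CC^2_q)\cong\cO(\Om)\ptn\mathfrak{B}_{|q|^{1/2}}$.

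The main obstacle I anticipate is the submultiplicativity verification for $\|\cdot\|_{\rho,\si}$, specifically the bookkeeping in the mixed product $(Y^au^b)(X^cu^d)$: one must show that the combinatorial coefficients appearing when rewriting $Y^aX^c$ in the $X$-before-$Y$ normal form (a $q$-binomial-type expansion) together with the weight $\si^{2(b+d+k)}$ do not exceed the product of weights $\rho^{c}\si^{2(b+d)}\cdot\rho^{-0}\cdots$ — equivalently that $\si^{2}\ge|q|$ is exactly the threshold. A secondary subtlety is checking that the algebra structure is genuinely \emph{continuous} (jointly, not just separately) on the completed tensor product; this follows once the seminorms are submultiplicative and the normal-form expansion converges in each seminorm, but the convergence of the double series defining a product of two elements of $\cO(\Om)\ptn\mathfrak{B}_{|q|^{1/2}}$ should be written out carefully using the rapid decay in the $\rho$-direction and the geometric decay (at rate below $\si$) in the $\si$-direction. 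Everything else — identifying the locally convex structure, invoking Lemma~\ref{sile}, the density argument for uniqueness — is routine given the results already established in the excerpt.
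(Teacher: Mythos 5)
Your part (B) is essentially the paper's argument (one-variable holomorphic calculi for $x$ and $y$ assembled into a map on $\cO(\Om)$, Lemma~\ref{sile} plus the universal property of $\mathfrak{B}_{|q|^{1/2}}$ for $u$, then density for uniqueness), but your part (A) has a genuine gap, and it is the heart of the theorem. The weighted $\ell^1$-seminorms you propose, with weight $\rho^i\si^{2j}$ on $X^iu^j$ and $Y^iu^j$, are not submultiplicative for any choice of $\rho$ and $\si$, and the claimed ``exact compensation'' at $\si^2\ge|q|$ is false. The obstruction comes not from rewriting $Y^aX^c$ but from commuting $u$ past $X$ or $Y$: since $uX=q^{-1}Xu$ one has, e.g.,
\begin{equation*}
u^b\,X^c \;=\; q^{-bc}\,X^c u^b ,\qquad Yu^b\cdot X^c \;=\; q^{-c(b+1)}\,X^{c-1}u^{b+1},
\end{equation*}
so with $|q|<1$ the structure constants in this normal form blow up like $|q|^{-bc}$, i.e.\ exponentially in the \emph{product} of the degrees. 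No weights of the diagonal form $w(X^iu^j)=\rho^i c_j$ can absorb this: submultiplicativity applied to $u^b\cdot X^c$ alone would force $|q|^{-bc}\le 1$. (Even the natural K\"othe weights $\rho^i r^j|q|^{j^2/2}$ of $\cO(\Om)\ptn\mathfrak{B}_{|q|^{1/2}}$ fail to be submultiplicative, for the same reason.) This is precisely why the paper does not attempt a direct seminorm computation: it realizes the topology through the operator representations $\pi_\la$ on $c_0$ and $\pi'_\mu$ on $\ell_1$, forms the map $\rho$ of \eqref{rhodef} into $\cO(\CC,\B(c_0))\times\cO(\CC,\B(\ell_1))$ (where the seminorms \emph{are} submultiplicative), and proves that $\rho$ is topologically injective via the first-row/first-column reduction $\eta$, Lemma~\ref{eta12} and the inductive estimate of Lemma~\ref{hnset}; only then does the multiplication extend and yield an Arens--Michael algebra. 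Your plan contains no substitute for this step.

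A second, independent error: even granting your seminorms, their completion would not be $\cO(\Om)\ptn\mathfrak{B}_{|q|^{1/2}}$. Requiring $\sum_j|\al_j|\si^{2j}<\infty$ for \emph{all} $\si>|q|^{1/2}$ is the same as requiring it for all $\si>0$, which describes entire functions of $u$, i.e.\ roughly $\cO(\Om)\ptn\cO(\CC)$. The algebra $\mathfrak{B}_{|q|^{1/2}}$ is defined by the weights $r^n|q|^{n^2/2}$ with the quadratic exponent (see \eqref{faBrdef}--\eqref{Bq12}); it is strictly larger and contains formal power series with superexponentially growing coefficients that are not holomorphic germs at all. You have conflated the universal property \eqref{xyest} (a statement about elements of Banach algebras with $\|u^n\|^{1/n}=O(s^n)$) with the K\"othe-space description of $\mathfrak{B}_s$ itself. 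So, as written, your argument would at best identify a smaller completion and would not prove the stated isomorphism $\cO(\CC^2_q)\cong\cO(\Om)\ptn\mathfrak{B}_{|q|^{1/2}}$; with the representation-theoretic injectivity argument missing, part (A) — and hence the homomorphism property of $\io_1$ used in (B) — remains unproved.
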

In fact,
\begin{equation}\label{Bq12}
\mathfrak{B}_{|q|^{1/2}}=\Bigl\{a=\sum_{n=0}^\infty  \al_n z^n\! :
\|a\|_{r,\om}\!:=\sum_{n=0}^\infty |\al_n|\,r^n |q|^{n^2/2}<\infty
\;\;\forall  r\in(0,\infty)\Bigr\};
\end{equation}
see \eqref{faBrdef}.

\begin{thm}\label{AMquplPI}
\emph{(cf. \cite[Theorem 4.3]{ArNew})}
Let $q\in\CC\setminus\{0\}$.

\emph{(A)}~The multiplication in $\mathcal{R}(\Om)\otimes \CC[u]$ can be extended to a continuous operation on $\cO(\Om)\ptn\CC[[u]]$ that turns it into a locally BPI algebra.

\emph{(B)}~If, in addition, $q\ne 1$, then, taking $\cO(\Om)\ptn\CC[[u]]$  with this multiplication, the embedding
$$
\io_2\!:\mathcal{R}(\CC^2_q)\to \cO(\Om)\ptn\CC[[u]]
$$
is an enveloping homomorphism with respect to $\mathsf{PI}$, i.e., $\cO(\CC^2_q)^{\mathrm{PI}}\cong\cO(\Om)\ptn\CC[[u]]$.
\end{thm}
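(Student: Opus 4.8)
The plan is to prove Theorem~\ref{AMquplPI} by combining the structural description of $\cR(\CC^2_q)$ as $\cR(\Om)\otimes\CC[u]$ with the universal properties of $\cO(\Om)$ and $\CC[[u]]$, mimicking the $C^\infty$-argument of \cite[Theorem~4.3]{ArNew}. For part (A), I would first check that the algebra multiplication on $\cR(\Om)\otimes\CC[u]$ is continuous for the projective tensor topology on $\cO(\Om)\ptn\CC[[u]]$, so that it extends (uniquely, by density of $\cR(\Om)\otimes\CC[u]$) to a separately — hence jointly — continuous bilinear map on the completion. The key point is to express the product of two elements $f\otimes u^m$ and $g\otimes u^n$ explicitly: writing $f,g$ in terms of the generators $X,Y$ and using $XY=qYX$, $XY=u$, one sees that the "cross terms'' $X^iY^j$ collapse to monomials in $X$ or $Y$ times a power of $u$, with a scalar that is a polynomial in $q$ (a $q$-binomial-type coefficient). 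Because $\CC[[u]]$ carries the topology making $u$ \emph{nilpotent} in each defining seminorm (Lemma~\ref{forseup}), these scalars cause no growth problem: the relevant seminorm estimates are finite, and one obtains a locally BPI algebra structure — local BPI-ness follows because the algebra is a projective limit of the truncations $\cO(\Om)\ptn(\CC[u]/u^N)$, each of which is a (nuclear Fréchet) algebra that is finitely generated as a module over its commutative subalgebra $\cO(\Om)$ and hence satisfies a polynomial identity; one should cite or reprove the standard fact that an algebra finite over its centre (or over a commutative subalgebra) is PI, and that this passes to projective limits.

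For part (B), with $q\neq 1$, I would verify the universal property directly. Given a Banach PI algebra $B$ and a homomorphism $\phi\colon\cR(\CC^2_q)\to B$, one must produce a unique continuous homomorphism $\wh\phi\colon\cO(\Om)\ptn\CC[[u]]\to B$ with $\wh\phi\io_2=\phi$. The essential observation is that in any Banach PI algebra the relation $xy=qyx$ with $q\neq 1$ forces $u=xy$ to be \emph{nilpotent}: this is where one invokes the PI hypothesis — a Shirshov/height-type argument, or a direct computation using that $x^ny^n=q^{-\binom n2}\,(\text{something})$ together with the polynomial identity, shows $u^N=0$ for some $N$ depending on the PI degree. (This is presumably the content of a cited lemma; it is the analogue of the $C^\infty$-case where smoothness of the transition maps forces nilpotency.) Granting nilpotency of $\phi(u)$, the map $\CC[[u]]\to B$, $u\mapsto\phi(u)$, is well defined and continuous by the universal property of $\CC[[u]]$ (Lemma~\ref{forseup}); meanwhile $\phi$ restricted to $\cR(\Om)$ extends to $\cO(\Om)\to B$ because $X,Y$ generate a \emph{commutative} subalgebra with $XY=0$ in the quotient — more precisely $\phi$ factors through two commuting one-variable subalgebras on which the spectral radius is controlled, and $\cO(\Om)=\cO(\CC)\times_{\CC}\cO(\CC)$ is exactly the Arens--Michael envelope of $\cR(\Om)=\CC[x,y]/(xy)$. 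One then assembles these into a map on the tensor product $\cO(\Om)\ptn\CC[[u]]$, checks it is multiplicative (it suffices to check on the dense subalgebra $\cR(\CC^2_q)$, where it agrees with $\phi$), and notes uniqueness is automatic from density. Finally, since $\cO(\Om)\ptn\CC[[u]]$ is itself locally in $\mathsf{PI}$ by part (A), this identifies it with $\cO(\CC^2_q)^{\mathrm{PI}}$.

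The main obstacle I anticipate is twofold. First, the \emph{nilpotency of $u$ in a Banach PI algebra} when $q\neq 1$: for $|q|<1$ one already knows from Lemma~\ref{sile} that $\|u^n\|^{1/n}\to 0$, which is weaker than nilpotency, so in the PI setting one genuinely needs to upgrade this using the polynomial identity, and the case $|q|=1$, $q\neq 1$ (a root of unity, or an irrational rotation) must be handled uniformly — here the PI structure is doing real work, and pinning down the right quantitative lemma (how $N$ depends on the degree of the identity and on $q$) is the delicate step. Second, the multiplicativity check for $\wh\phi$ and the compatibility of the three extensions ($\cO(\CC)$ in $X$, $\cO(\CC)$ in $Y$, $\CC[[u]]$ in $u$) with the cross relations — one must be careful that, although $X$ and $Y$ individually land in commutative subalgebras, the \emph{full} map has the correct behaviour on mixed monomials $X^iu^mY^ju^n$, which is exactly where the $q$-dependent scalars from part (A) re-enter; getting the bookkeeping right, rather than any deep difficulty, is the bulk of the work. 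Everything else — continuity of the extended product, nuclearity and the projective-limit description, the PI property of the truncations — is routine given the cited results.
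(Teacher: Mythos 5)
The decisive step that your proposal leaves unproven is exactly the one the paper does not treat as routine: that any homomorphism $\phi$ from $\cR(\CC^2_q)$ to a Banach PI algebra sends $u=xy$ to a \emph{nilpotent} element. Your suggested routes (a Shirshov/height-type argument, or a direct computation with a multilinear identity yielding $u^N=0$ with $N$ depending only on the PI degree and on $q$) do not work as stated, because the conclusion is not a formal consequence of the identity alone, uniformly in $q\ne 1$: for $q=-1$ the Pauli matrices $x=\left(\begin{smallmatrix}0&1\\1&0\end{smallmatrix}\right)$, $y=\left(\begin{smallmatrix}1&0\\0&-1\end{smallmatrix}\right)$ in the Banach PI algebra $M_2(\CC)$ satisfy $xy=-yx$ while $xy$ is invertible, and more generally at any root of unity the quantum plane has finite-dimensional representations in which $u$ is invertible. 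So no bound depending only on the PI degree can exist, and any correct argument must use structural input beyond the bare identity. The paper's argument (Proposition~\ref{XYnil}) is entirely different and purely algebraic, with no Banach or quantitative ingredient: the subalgebra generated by $\phi(x),\phi(y)$ is a finitely generated PI quotient of $\cR(\CC^2_q)$; by the classification of primitive ideals of the quantum plane in \cite{BGo02} combined with Kaplansky's theorem (Proposition~\ref{KaplCC}), every such quotient is commutative modulo its radical (Lemma~\ref{reprone}); since $xy=(1-q^{-1})^{-1}[x,y]$, the element $u$ then lies in the radical; and the Braun--Kemer--Razmyslov theorem makes that radical nilpotent. (Note that the classification invoked there is the one for $q$ not a root of unity, which is consistent with the counterexample above.) Without this lemma your construction of $\wh\phi$ in part (B) does not get started, so the proposal has a genuine gap at its heart.

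For part (A) your route also differs from the paper's. The paper, following the construction of \cite[Theorem~4.3]{ArNew}, builds a continuous linear map $\cO(\Om)\ptn\CC[[u]]\to\prod_p\cO(\CC,\rT_p)^2$ (upper triangular matrices) whose restriction to $\cR(\CC^2_q)$ is multiplicative, proves it topologically injective (using that an ideal generated by a polynomial in an algebra of holomorphic functions is closed), and then gets both the continuity of the extended multiplication and the locally BPI property at once from Propositions~\ref{subapr} and~\ref{OCAPI}. Your alternative --- direct seminorm estimates for the $u$-adic coefficients of products, plus the observation that each truncation $\cO(\Om)\ptn\CC[u]/(u^N)$ is a free rank-$N$ module over the commutative subalgebra $\cO(\Om)$ and hence PI, with $\cO(\Om)\ptn\CC[[u]]$ the projective limit of these truncations --- is plausible and arguably more self-contained, but as written it is only a sketch: the continuity estimates, the passage from ``Fr\'echet Arens--Michael algebra satisfying a PI'' to ``locally BPI'' (via density and multilinearization of the identity in the Banach completions), and a precise reference for ``finite module over a commutative subalgebra implies PI'' all still have to be supplied. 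The remainder of your part (B) (holomorphic functional calculus on $\cO(\Om)$, Lemma~\ref{forseup} for $\CC[[u]]$, assembly through the tensor product, uniqueness by density) does match the paper, which runs this part exactly as in part (B) of Theorem~\ref{AMqupl}.
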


\begin{rem}\label{Lieni}
Let $\fg$ be a finite-dimensional nilpotent complex Lie algebra and $U(\fg)$ the corresponding universal enveloping algebra. Then, as a locally convex space,
$$
\widehat{U}(\fg)\cong \mathfrak{A}_{i_1}\ptn\cdots\ptn\mathfrak{A}_{i_p}\ptn\cO(\CC^k)
$$
for some $i_1,\ldots,i_p$ and $k$. Here $\mathfrak{A}_{i_1},\ldots,\mathfrak{A}_{i_p}$ are certain power series algebras of the form given in~\eqref{faComdef} below; for details see \cite[Theorem~2.5]{ArSLA}. Other forms of this isomorphism can be found in \cite[Theorem~1.1]{ArAMN},  \cite[Theorem~4.3]{AHHFG} and \cite[Theorem~6.4]{Ar_smash}).  (In the case when $\fg$ is solvable, there are similar but slightly more complicated formulas.)
Furthermore, it follow from \cite[Theorem~6.6]{Ar_smash} that
$$
\widehat{U}(\fg)^{\mathrm{PI}}\cong \CC[[x_1]]\ptn\cdots\ptn\CC[[x_p]]\ptn\cO(\CC^k).
$$
The latter algebra is actually the algebra of `formally-radical functions' considered by Dosi in~\cite{Do10B}.

Thus the degeneracy effect, when some elements in an envelope of a non-commutative algebra generate spaces of power series that are larger than the space of entire functions, is not only a characteristic of quantum planes.
\end{rem}

\section{The Arens--Michael envelope}

In the case when $|q|<1$, our description of $\cO(\CC^2_q)$, the Arens--Michael envelope of  $\mathcal{R}(\CC^2_q)$,  is based on the following simple lemma.
\begin{lm}\label{sile}
Let $X$ and $Y$ be elements of a Banach algebra such that $XY=qYX$ for some
$|q|<1$. Then the growth condition in~\eqref{xyest} holds with $s=|q|^{1/2}$ and  $u=XY$ (or $u=YX$).
\end{lm}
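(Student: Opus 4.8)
The plan is to derive an explicit ``normal-ordered'' formula for $u^n$ and then estimate its norm factor by factor. The crucial point will be to write $u^n$ as a scalar multiple of $Y^nX^n$ rather than of $X^nY^n$: only the former carries a scalar prefactor that decays (super-exponentially) as $n$ grows, and that decay is exactly what produces the growth condition~\eqref{xyest}.

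First I would record the auxiliary identity $X^kY=q^kYX^k$ for all $k\ge 0$, which follows by an immediate induction on $k$ from $XY=qYX$. Using it, I would prove by induction on $n$ that
\begin{equation*}
(XY)^n=q^{n(n+1)/2}\,Y^nX^n .
\end{equation*}
(The induction step uses $(XY)^{n+1}=(XY)^n\cdot XY=q^{n(n+1)/2}Y^nX^nXY=q^{n(n+1)/2}Y^n\bigl(X^{n+1}Y\bigr)$ together with $X^{n+1}Y=q^{n+1}YX^{n+1}$; alternatively one may start from $(XY)^n=(qYX)^n=q^n(YX)^n$ and normal-order $(YX)^n=q^{n(n-1)/2}Y^nX^n$.) Taking norms and using submultiplicativity,
\begin{equation*}
\|u^n\|=|q|^{n(n+1)/2}\,\|Y^nX^n\|\le |q|^{n(n+1)/2}\bigl(\|X\|\,\|Y\|\bigr)^n ,
\end{equation*}
so that $\|u^n\|^{1/n}\le \bigl(|q|^{1/2}\|X\|\,\|Y\|\bigr)\,\bigl(|q|^{1/2}\bigr)^n$, which is precisely the assertion with $s=|q|^{1/2}$. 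For $u=YX$ the same computation with $(YX)^n=q^{n(n-1)/2}Y^nX^n$ gives $\|u^n\|^{1/n}\le\bigl(|q|^{-1/2}\|X\|\,\|Y\|\bigr)\,\bigl(|q|^{1/2}\bigr)^n$, again $O(s^n)$.

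The only real issue — and essentially the entire content of the lemma — is getting the sign of the exponent of $q$ right, i.e.\ choosing the correct normal ordering. If one instead commutes the inner factors of $(XY)^n$ so as to move all $X$'s to the left, one obtains $(XY)^n=q^{-n(n-1)/2}X^nY^n$, whose scalar prefactor $|q|^{-n(n-1)/2}$ blows up super-exponentially because $|q|<1$, and no estimate follows. Passing through $(XY)^n=q^n(YX)^n$ (equivalently, ordering so that all $Y$'s stand to the left of all $X$'s) converts this into the super-exponentially decaying prefactor $|q|^{n(n+1)/2}$, which dominates the at-most-exponential growth of $\|Y^nX^n\|$. Once the right normal form is in hand, nothing beyond submultiplicativity is needed.
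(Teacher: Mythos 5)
Your proof is correct and follows essentially the same route as the paper: establish by induction the normal-ordered identity $(XY)^n=q^{n(n+1)/2}Y^nX^n$, then use submultiplicativity to get $\|(XY)^n\|^{1/n}\le |q|^{(n+1)/2}\|Y\|\,\|X\|=O(|q|^{n/2})$, with the analogous computation for $u=YX$. Your additional remarks about why the $Y^nX^n$ ordering (rather than $X^nY^n$) is the one that works are accurate but not needed beyond what the paper already does.
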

\begin{proof}
It is not hard to see by induction that $(XY)^n=q^{n(n+1)/2}Y^{n}X^n$ for every $n\in\N$. Therefore
$\|(XY)^n\|^{1/n} \le q^{(n+1)/2}\|Y\|\,\|X\|$  and hence~\eqref{xyest} holds.

The case when $u=YX$ is similar.
\end{proof}

First, we demonstrate the existence of a universal algebra in a more general situation by describing its explicit form.
Let $\om=(\om_n;\,n\in\Z_+)$ be a submultiplicative weight, i.e., $\om_n\ge 0$ and $\om_{n+m}\le \om_n\om_m$ for all $n$ and $m$. Consider the power series space
\begin{equation}
 \label{faComdef}
\mathfrak{C}_\om\!:=\Bigl\{a=\sum_{n=0}^\infty  \al_n z^n\! :
\|a\|_{r,\om}\!:=\sum_{n=0}^\infty |\al_n|\,r^n\,\om_n<\infty
\;\;\forall r\in (0,\infty)\Bigr\}
\end{equation}
and endow it with the topology determined by the family $(\|\cdot\|_{r,\om};\,r\in (0,\infty))$.
The submultiplicativity of $\om$ implies  that $\|a_1a_2\|_{r,\om}\le \|a_1\|_{r,\om} \|a_2\|_{r,\om}$ for every $a_1,a_2 \in\mathfrak{C}_\om$. Also, being a K\"{o}the sequence space, $\mathfrak{C}_\om$ is complete. Thus it is an Arens--Michael algebra with respect to the multiplication extended from $\CC[z]$.

In \cite{Gr71,Gr73} Grabiner considered a  power  series  calculus  for a given quasi-nilpotent operator. We need a simple modification, where we take not a single operator but a class of operators or elements of Banach algebras given by a restriction on the growth of powers. We
formulate the existence of calculus as a universal property for $\mathfrak{C}_\om$.

\begin{pr}\label{PexunBr}
Let $\om=(\om_n)$ be a submultiplicative weight and $b$ an element of a Banach algebra~$B$. Suppose that $\|b^n\|^{1/n}=O(\om_n^{1/n})$  as $n\to\infty$. Then there is a unique continuous unital homomorphism $\psi\!:\mathfrak{C}_\om\to B$ that maps $z$ to $b$.
\end{pr}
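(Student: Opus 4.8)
The plan is to define $\psi$ on a power series by $\psi\bigl(\sum_n\al_n z^n\bigr)=\sum_n\al_n b^n$ and then check in turn that this is well defined, continuous, multiplicative, unital, and sends $z$ to $b$; uniqueness will then be a formal consequence of the density of polynomials in $\mathfrak{C}_\om$. The first---and essentially the only nonroutine---step is to turn the asymptotic hypothesis into a uniform estimate. Since $\|b^n\|^{1/n}=O(\om_n^{1/n})$ as $n\to\infty$, there are $C_0>0$ and $N_0\in\N$ with $\|b^n\|\le C_0^n\om_n$ for all $n\ge N_0$; assuming $\om_n>0$ for every $n$ (should some $\om_n$ vanish, submultiplicativity forces $\om_m=0$ for all $m\ge n$, so $b$ is nilpotent and the statement is trivial), one may enlarge $C_0$ to a finite $C\in(0,\infty)$ with $\|b^n\|\le C^n\om_n$ for every $n\ge 0$.

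Now fix $a=\sum_n\al_n z^n\in\mathfrak{C}_\om$. Then $\sum_n\|\al_n b^n\|\le\sum_n|\al_n|\,C^n\om_n=\|a\|_{C,\om}<\infty$, so the series $\sum_n\al_n b^n$ converges absolutely in the Banach space $B$; let $\psi(a)$ be its sum. Linearity is clear, as are $\psi(1)=1$ and $\psi(z)=b$. The same estimate gives $\|\psi(a)\|\le\|a\|_{C,\om}$, so $\psi$ is dominated by one of the defining seminorms of $\mathfrak{C}_\om$ and is hence continuous. For multiplicativity, take $a_1=\sum_i\al_i z^i$ and $a_2=\sum_j\be_j z^j$; the double series $\sum_{i,j}\al_i\be_j b^{i+j}$ converges absolutely, since $\sum_{i,j}|\al_i|\,|\be_j|\,\|b^{i+j}\|\le\sum_{i,j}|\al_i|\,|\be_j|\,C^{i+j}\om_i\om_j=\|a_1\|_{C,\om}\,\|a_2\|_{C,\om}$ by the bound above together with $\om_{i+j}\le\om_i\om_j$, so the double series may be summed in any order: grouping by $k=i+j$ identifies it with $\psi(a_1a_2)$, factoring identifies it with $\psi(a_1)\psi(a_2)$, and these agree.

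It remains to note that polynomials are dense in $\mathfrak{C}_\om$: for $a=\sum_n\al_n z^n$ the partial sums converge to $a$ in each seminorm $\|\cdot\|_{r,\om}$, because the tails $\sum_{n>N}|\al_n|\,r^n\om_n$ tend to $0$. Consequently any continuous unital homomorphism $\mathfrak{C}_\om\to B$ carrying $z$ to $b$ coincides with $\psi$ on the dense subalgebra $\CC[z]$, hence everywhere, which gives uniqueness. The only point calling for real care is the passage from the $O$-hypothesis to the uniform bound $\|b^n\|\le C^n\om_n$ valid for all $n$, plus the small bookkeeping around the zeros of $\om$; the rest is a standard manipulation of absolutely convergent series.
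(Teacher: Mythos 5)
Your proposal is correct and follows essentially the same route as the paper: convert the $O$-hypothesis into a uniform bound $\|b^n\|\le C^n\om_n$, deduce $\|\psi(a)\|\le\|a\|_{C,\om}$, and get uniqueness from the density of $\CC[z]$ in $\mathfrak{C}_\om$. The only cosmetic difference is that the paper defines $\psi$ on polynomials and extends by continuity, while you define it directly by an absolutely convergent series and verify multiplicativity by rearranging a double series; your aside on vanishing $\om_n$ is a degenerate corner the paper implicitly excludes.
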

\begin{proof}
Take $r>0$ such that $\|b^n\|^{1/n}\le r\,\om_n^{1/n}$ for every $n$.
Note that $\psi$ is obviously defined on polynomials by the formula $\sum \al_n z^n\mapsto \sum \al_n b^n$.
Also, if $a=\sum_{n=0}^N \al_n z^n$, then
$$
\|\psi(a)\|\le \sum_{n=0}^N |\al_n| \|b^n\|\le \sum_{n=0}^N |\al_n|\,r^n\,\om_n=\|a\|_{r,\om}.
$$
Thus $\psi$ is continuous and hence extends uniquely to $\mathfrak{C}_\om$.
\end{proof}

Put now $\om_n\!:=s^{n^2}$, where  $s\in(0,1)$. Since $(m+n)^2\ge m^2+n^2$ and $s<1$, we have that
$$
s^{(m+n)^2}\le s^{m^2+n^2}
$$
for every $m,n\in\Z_+$, i.e., $(\om_n)$ is submultiplicative. In this case, we use the notations $\mathfrak{B}_s$ for $\mathfrak{C}_\om$. In detail,
\begin{equation}
 \label{faBrdef}
\mathfrak{B}_s\!:=\Bigl\{a=\sum_{n=0}^\infty  \al_n z^n\! :\,
\sum_{n=0}^\infty |\al_n|\,r^n s^{n^2}<\infty
\;\;\forall  r\in(0,\infty)\Bigr\}.
\end{equation}

We immediately obtain the following corollary of Proposition~\ref{PexunBr}.

\begin{co}\label{PexunCom}
Let $s\in(0,1)$ and $b$ an element of a Banach algebra~$B$. Suppose that
$\|b^n\|^{1/n}=O(s^n)$  as $n\to\infty$. Then there is a unique continuous unital homomorphism
$\psi\!:\mathfrak{B}_s\to B$ that maps $z$ to $b$.
\end{co}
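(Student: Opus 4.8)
The plan is to deduce this directly from Proposition~\ref{PexunBr} by specialising the weight, so there is essentially nothing to do beyond bookkeeping. Recall that $\mathfrak{B}_s$ was introduced precisely as $\mathfrak{C}_\om$ for the weight $\om_n:=s^{n^2}$, and that this weight was already verified to be submultiplicative (using $(m+n)^2\ge m^2+n^2$ together with $s<1$). For this particular choice one has $\om_n^{1/n}=s^n$, so the growth hypothesis $\|b^n\|^{1/n}=O(s^n)$ as $n\to\infty$ is literally the same as $\|b^n\|^{1/n}=O(\om_n^{1/n})$ as $n\to\infty$, which is the hypothesis of Proposition~\ref{PexunBr}.

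Consequently Proposition~\ref{PexunBr} applies verbatim and produces a unique continuous unital homomorphism $\psi\colon\mathfrak{C}_\om\to B$ with $\psi(z)=b$; since $\mathfrak{C}_\om=\mathfrak{B}_s$ by definition, this is exactly the asserted homomorphism, and uniqueness is inherited as well. There is no real obstacle: the only points to make explicit are the identification $\mathfrak{B}_s=\mathfrak{C}_{(s^{n^2})}$ (already established) and the elementary rewriting $\om_n^{1/n}=s^n$. If desired, one can additionally note that the norm estimate $\|\psi(a)\|\le\|a\|_{r,\om}$ from the proof of Proposition~\ref{PexunBr} specialises to $\|\psi(a)\|\le\sum_{n}|\al_n|\,r^n s^{n^2}$ for a suitable $r>0$, but this refinement is not required for the statement.
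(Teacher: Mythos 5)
Your proof is correct and is exactly the paper's route: the paper obtains the corollary "immediately" from Proposition~\ref{PexunBr} by specialising to the submultiplicative weight $\om_n=s^{n^2}$, for which $\om_n^{1/n}=s^n$, just as you do. Nothing further is needed.
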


The following extension to Arens--Michael algebras is straightforward.

\begin{co}\label{PexunCom2}
Let $s\in(0,1)$ and $b$ an element of an Arens--Michael algebra~$B$. Suppose that
$\|b^n\|^{1/n}=O(s^n)$  as $n\to\infty$ for every $\|\cdot\|$ in some system of submultiplicative seminorms that determines the topology on~$B$. Then there is a unique continuous unital homomorphism
$\psi\!:\mathfrak{B}_s\to B$ that maps $z$ to $b$.
\end{co}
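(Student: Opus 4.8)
The plan is to reduce to the Banach-algebra case already settled in Corollary~\ref{PexunCom} by means of the projective-limit decomposition of $B$. Since $B$ is an Arens--Michael algebra whose topology is determined by the given system of submultiplicative seminorms, we may assume --- after enlarging the system by finite maxima, which stay submultiplicative and do not change the topology --- that the system is directed. Then $B$ is topologically isomorphic to $\varprojlim_\nu B_\nu$, where $B_\nu$ is the Banach algebra obtained by completing $B/\ker\|\cdot\|_\nu$, the $\pi_{\mu\nu}\colon B_\mu\to B_\nu$ (for $\|\cdot\|_\nu\le\|\cdot\|_\mu$) are the canonical connecting homomorphisms, and $\rho_\nu\colon B\to B_\nu$ are the canonical maps. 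Set $b_\nu:=\rho_\nu(b)$.

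First I would apply Corollary~\ref{PexunCom} inside each $B_\nu$. By hypothesis $\|b_\nu^n\|^{1/n}=\|b^n\|_\nu^{1/n}=O(s^n)$ as $n\to\infty$, so there is a unique continuous unital homomorphism $\psi_\nu\colon\mathfrak{B}_s\to B_\nu$ with $\psi_\nu(z)=b_\nu$. Next I would check that the family $(\psi_\nu)$ is compatible with the connecting maps: for $\|\cdot\|_\nu\le\|\cdot\|_\mu$, both $\pi_{\mu\nu}\circ\psi_\mu$ and $\psi_\nu$ are continuous unital homomorphisms $\mathfrak{B}_s\to B_\nu$ sending $z$ to $b_\nu$, so they coincide by the uniqueness part of Corollary~\ref{PexunCom}. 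Hence the $\psi_\nu$ factor through the projective limit and yield a continuous unital homomorphism $\psi\colon\mathfrak{B}_s\to\varprojlim_\nu B_\nu=B$ with $\rho_\nu\circ\psi=\psi_\nu$ for all $\nu$; in particular $\rho_\nu(\psi(z))=b_\nu=\rho_\nu(b)$ for every $\nu$, so $\psi(z)=b$.

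For uniqueness, any continuous unital homomorphism $\mathfrak{B}_s\to B$ sending $z$ to $b$ agrees with $\psi$ on the dense subalgebra $\CC[z]\subseteq\mathfrak{B}_s$, and since $B$ is Hausdorff the two maps coincide. There is no genuine obstacle here: this is the routine passage from Banach algebras to their projective limits. The only point requiring a little care is the reduction to a directed system of seminorms together with the verification that the given (not necessarily maximal) system still presents $B$ as the corresponding projective limit, which follows from the completeness of $B$ and the fact that the system determines its topology.
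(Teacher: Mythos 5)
Your argument is correct and is essentially the routine extension the paper has in mind when it labels the corollary straightforward: reduce to Corollary~\ref{PexunCom} via the Arens--Michael decomposition $B\cong\varprojlim_\nu B_\nu$ over a directed defining system of submultiplicative seminorms, use the uniqueness part of the Banach case to get compatibility with the connecting maps, and use the density of $\CC[z]$ in $\mathfrak{B}_s$ for uniqueness. (Equivalently, one can just repeat the estimate from the proof of Proposition~\ref{PexunBr} for each seminorm of the given system and invoke completeness of $B$, which avoids the projective-limit bookkeeping, but the substance is the same.)
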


Next we need two families of infinite-dimensional representations. Consider the standard Banach sequence spaces $c_0$ and $\ell_1$ and denote by $\B(c_0)$ and $\B(\ell_1)$ the Banach algebras of bounded operators on $c_0$ and $\ell_1$, respectively. We use the following notations both for $c_0$ and $\ell_1$:
\begin{itemize}
\item $E$ denotes the operator  of left shift;

\item $F$ denotes the operator  of right shift;

\item $D$ denotes the diagonal operator with the entries $1,q,q^2,\ldots$
\end{itemize}

It is easy to see that $ED=qDE$ and $DF=qFD$ and so the formulas
$$
\pi_{\la}\!:x\mapsto E ,\,y\mapsto \la D\quad\text{and}\quad \pi'_{\mu}\!:x\mapsto \mu D,\,y\mapsto F \qquad(\la,\mu\in\CC)
$$
define bounded representations of $\mathcal{R}(\CC^2_q)$ on $c_0$ and $\ell_1$, respectively. It is convenient to consider them as homomorphisms from $\mathcal{R}(\CC^2_q)$ to $\B(c_0)$ and $\B(\ell_1)$. Treating $\la$ and $\mu$ as variables, we also obtain homomorphisms
$$
\wt\pi\!:\mathcal{R}(\CC^2_q)\to \cO(\CC,\B(c_0))\quad\text{and}\quad\wt\pi'\!:\mathcal{R}(\CC^2_q)\to \cO(\CC,\B(\ell_1))
$$
to the algebras of operator-valued entire functions.

Since the pairs $(E,\la D)$ and $(\mu D,F)$ satisfy the relation Lemma~\ref{sile}, we have  that
$$
\|(\la ED)^n\|^{1/n}=O(|q|^{n/2})\quad\text{and}\quad\|(\mu DF)^n\|^{1/n}=O(|q|^{n/2}) \quad \text{as $n\to\infty$}\quad(\forall\,\la,\mu).
$$
Moreover, the same estimates hold for the systems of standard max-seminorms on $\cO(\CC,\B(c_0))$ and $\cO(\CC,\B(\ell_1))$.


It follows from Corollary~\ref{PexunCom2} that the homomorphisms $u\mapsto \la ED$ and $u\mapsto \mu DF$   extend from $\CC[u]$  to $\mathfrak{B}_{|q|^{1/2}}$. On the other hand, the homomorphisms 
$$
\CC[x]\to\cO(\CC,\B(c_0))\quad\text{and}\quad\CC[y]\to\cO(\CC,\B(\ell_1))
$$  
obviously extend to $\cO(\CC)$. Thus  the restrictions of $\wt\pi$ and $\wt\pi'$ to $\CC[x]\otimes\CC[u]$ and $\CC[y]\otimes\CC[u]$, respectively,  have extensions to continuous linear maps  
$$
\cO(\CC)\ptn\mathfrak{B}_{|q|^{1/2}}\to\cO(\CC,\B(c_0))\quad\text{and}\quad \cO(\CC)\ptn\mathfrak{B}_{|q|^{1/2}}\to\cO(\CC,\B(\ell_1)).
$$

We can identify $\mathcal{R}(\CC^2_q)$ with $\mathcal{R}(\Om)\otimes \CC[u]$ and treat $\mathcal{R}(\Om)$ as a subalgebra of $\CC[t]^2$ (see \eqref{cROm}), $\cO(\Om)$ as a subalgebra of $\cO(\CC)^2$ (see \eqref{cOOm}), and  $\cO(\Om)\ptn\mathfrak{B}_{|q|^{1/2}}$ as a subalgebra of $(\cO(\CC)\ptn\mathfrak{B}_{|q|^{1/2}})^2$ (using the fact that $\mathfrak{B}_{|q|^{1/2}}$ is nuclear). So we get a map
\begin{equation}\label{rhodef}
\rho\!:\cO(\Om)\ptn\mathfrak{B}_{|q|^{1/2}}\to \cO(\CC,\B(c_0))\times \cO(\CC,\B(\ell_1)).
\end{equation}

We want to prove that $\rho$ is topologically injective. For this we need its corestriction.
Take the first row in the matrix representing elements of $\B(c_0)$ and the first column in the matrices representing elements of $\B(\ell_1)$. The  row case gives the Banach space  dual to $c_0$, i.e., $\ell_1$, and the column case also gives $\ell_1$.

So we obtain a map
\begin{equation}\label{etadef}
\eta\!:\cO(\Om)\ptn\mathfrak{B}_{|q|^{1/2}}\to \cO(\CC,\ell_1)\times\cO(\CC,\ell_1)
\end{equation}
in which we denote the first and the second multiples by $\eta_1$ and $\eta_2$, respectively. Next we describe these maps in detail. 

Put $\Phi_x\!:\CC[x]\to \mathcal{R}(\CC^2_q)\!:x\to X$ and $\Phi_y\!:\CC[y]\to \mathcal{R}(\CC^2_q)\!:y\to Y$; cf. \eqref{Riso}. Then we can write every element of $\mathcal{R}(\CC^2_q)$ as
\begin{equation}\label{RPhixy}
a=\sum_{n\ge 0}(\Phi_x(f_n)+\Phi_y(g_n))u^n,
\end{equation}
where $f_n\in\CC[x]$ and $g_n\in\CC[y]$ with $f_n(0)=g_n(0)$. 

By a standard result, we can identify $\cO(\CC,\ell_1)$ with $\cO(\CC)\ptn \ell_1$ (see, e.g., \cite[Chapter\,II, p.\,114, Theorem 4.14]{He89}) and moreover with the vector-valued sequence space $\ell_1[\cO(\CC)]$. (In what follows we enumerate vectors of bases in $c_0$ and $\ell_1$ by non-negative integers.) 
For $\bar h=(h_0,h_1,\ldots)\in\ell_1[\cO(\CC)]$ and $n\in\Z_+$ put
$$
(W_n(\bar h)(z)\!:=h_n(z)z^{n} q^{n(n+1)/2}+\sum_{k=0}^{n} \frac{h_k^{(n-k)}(0)}{(n-k)!}z^k q^{k(k+1)/2}\quad(z\in\CC).
$$
(Here Lagrange's notation for derivatives is used.)

First, we write $\eta_1$ and $\eta_2$ in terms of operators $W_n$.

\begin{lm}\label{eta12}
For every $a\in\mathcal{R}(\CC^2_q)$ given in the form~\eqref{RPhixy} the equalities
\begin{align*}
(\eta_1(a))(\la)& =(W_0(\bar g))(\la),\ldots, (W_n(\bar g))(\la),\ldots),\\
(\eta_2(a))(\mu)& =(W_0(\bar f))(\mu),\ldots, (W_n(\bar f))(\mu),\ldots)^T
\end{align*}
hold.  (Here $T$ stands for the transpose matrix.)
\end{lm}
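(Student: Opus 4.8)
The plan is to verify the two formulas by direct computation, reducing everything to the action of the shift and diagonal operators on the standard bases of $c_0$ and $\ell_1$. Since both $\eta_1$ and $\eta_2$ are continuous and $\mathcal{R}(\CC^2_q)$ is (by construction) dense in the relevant completions, it suffices to check the identities on the algebraically given elements~\eqref{RPhixy}; moreover, by linearity and continuity it is enough to treat each homogeneous piece $\Phi_x(f)u^n$ and $\Phi_y(g)u^n$ separately, and in fact monomials $x^ju^n$ and $y^ju^n$. So first I would record the explicit matrix entries: under $\pi_\la$ we have $x\mapsto E$, $y\mapsto\la D$, $u=xy\mapsto \la ED$, and using $(XY)^n=q^{n(n+1)/2}Y^nX^n$ from Lemma~\ref{sile}'s proof, $u^n\mapsto \la^n q^{n(n+1)/2} D^nE^n$; symmetrically under $\pi'_\mu$, $x\mapsto\mu D$, $y\mapsto F$, $u^n\mapsto \mu^n q^{n(n+1)/2}F^nD^n$.

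Next I would compute the first row of $\pi_\la(y^j u^n)=\la^{j} \la^n q^{n(n+1)/2} D^j \cdot D^n E^n$ acting on $c_0$, i.e. pair with the basis vectors. The operator $D^nE^n$ sends the $k$-th basis vector $e_k$ to $q^{n(k)}e_{k-n}$ for $k\ge n$ (after shifting down $n$ steps, the diagonal has picked up the appropriate power of $q$), and then $D^j$ multiplies by another power of $q$; extracting the first row means reading off the coefficient of $e_0$, which forces $k=n$ in the part coming from $z^n$. Carrying this through, the $n$-th slot of $\eta_1$ of a monomial $y^ju^m$ should collapse to the two terms in $W_n(\bar g)$: the term $h_n(\la)\la^n q^{n(n+1)/2}$ coming from the ``diagonal'' contribution where the power of $u$ matches the slot index, and the sum $\sum_{k\le n} \frac{h_k^{(n-k)}(0)}{(n-k)!}\la^k q^{k(k+1)/2}$ coming from lower powers of $u$ combined with the Taylor coefficients of the entire-function parts $h_k$ (these Taylor coefficients are exactly how the row of a shifted operator picks up the polynomial coefficients of $f$ in $y$, after the identification $\cO(\CC,\ell_1)\cong\ell_1[\cO(\CC)]$). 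The computation for $\eta_2$ with $\pi'_\mu$ is the mirror image: one takes the first \emph{column} of $F^nD^n$, which is why the transpose $T$ appears, and the roles of $f$ and $g$ swap because $\pi'_\mu$ uses $x\mapsto\mu D$, $y\mapsto F$.

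The main obstacle I anticipate is purely bookkeeping: correctly tracking the powers of $q$ through the composition of $D$'s and shifts, and correctly matching the two combinatorial contributions (the ``same-slot'' term versus the Taylor-series term) with the definition of $W_n$. In particular one must be careful that the identification $\eta\colon\cO(\Om)\ptn\mathfrak{B}_{|q|^{1/2}}\to(\cO(\CC)\ptn\ell_1)^2$ is set up so that the ``$\ell_1$-index'' corresponds to the power of $z$ in $\mathfrak{B}_{|q|^{1/2}}$ (equivalently, to the power of $u$), and that taking the first row/column genuinely lands in $\ell_1$ (resp.\ its predual) as claimed in the paragraph preceding the lemma; this is where the factor $z^k q^{k(k+1)/2}$ inside $W_n$ originates, $z$ being the formal variable of $\mathfrak{B}_{|q|^{1/2}}$ evaluated at $\la$ or $\mu$. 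Once the action on monomials is pinned down, extending by linearity over $f_n,g_n$ and then by density/continuity to all of $\cO(\Om)\ptn\mathfrak{B}_{|q|^{1/2}}$ is immediate, and the condition $f_n(0)=g_n(0)$ guarantees consistency at $z=0$ so that the two entire-function components glue to an element of $\cO(\Om)$. I would present the monomial computation in a short display, then state that the general case follows by linearity and continuity.
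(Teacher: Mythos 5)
Your overall strategy --- compute the images of the generators and of $u^n$ under $\pi_\la$ and $\pi'_\mu$ as shift/diagonal matrices, extract the first row (resp.\ column), and assemble by matrix multiplication --- is the same as the paper's. But there is a genuine gap, and it sits exactly in the bookkeeping you defer. For $\eta_1$ you compute only $\pi_\la(y^ju^n)$ and assert that the $n$-th slot ``collapses to the two terms of $W_n(\bar g)$'', with the Taylor-coefficient sum attributed to the same family $\bar g$. Under $\pi_\la$, however, $y\mapsto\la D$ acts diagonally, so $g_m(\la D)$ contributes exactly one first-row entry, namely $g_m(\la)$ in slot $m$, and no tail at all; the Taylor-coefficient terms $h_k^{(n-k)}(0)/(n-k)!$ appearing in the first row come from the $x$-part $f_m(E)$ (powers of the left shift), i.e.\ from $\bar f$, and symmetrically for $\eta_2$. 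Hence the claimed identities are not verifiable ``separately on each homogeneous piece $\Phi_x(f)u^n$ and $\Phi_y(g)u^n$, and in fact on monomials'': for $a=\Phi_x(f)u^n$ with $f(0)=0$, $f\neq 0$, one may take $\bar g=0$, so the right-hand side built from $\bar g$ vanishes while $\eta_1(a)$ does not. The $f$- and $g$-contributions are intertwined, and the only bridge between them is the hypothesis $f_k(0)=g_k(0)$, which the paper's proof invokes explicitly at precisely this point to recombine the two contributions; in your argument this hypothesis appears only as a gluing condition for membership in $\cO(\Om)$ and never enters the computation. So the per-monomial reduction cannot yield the lemma as stated: you must compute the first row of $\pi_\la\bigl((\Phi_x(f_m)+\Phi_y(g_m))u^m\bigr)$ with both summands kept together (first row of the left factor times the full matrix of $\pi_\la(u^m)$, since $u^m$ stands on the right in \eqref{RPhixy}) and then use $f_m(0)=g_m(0)$, as the paper does.

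Two smaller points. The appeal to continuity and density is superfluous and slightly misplaced: the lemma is a purely algebraic identity for $a\in\cR(\CC^2_q)$ written in the form \eqref{RPhixy}, the pairs $(f_m,g_m)$ are finitely many and uniquely determined, so linearity over them is all that is needed, and no extension of the identity to all of $\cO(\Om)\ptn\mathfrak{B}_{|q|^{1/2}}$ is asserted or required. Also, small slips in the operator bookkeeping (e.g.\ $D^nE^ne_k=q^{n(k-n)}e_{k-n}$, not $q^{nk}e_{k-n}$, and the parenthetical ``polynomial coefficients of $f$ in $y$'') are symptoms of the same confusion about which of the two families produces the Taylor tail; this is the part of the computation that must be written out, not waved through.
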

\begin{proof}
Note that $\pi_\la(u)$ is the operator of weighted left shift with the weight sequence $(\la q, \la q^2, \ldots)$. It follows that the only non-zero entry in the matrix of $\pi_\la(u^n)$ in the first row is $\la^n q^{n(n+1)/2}$ at the $n$th place. On the other hand, we have that
$$
(\eta_1(\Phi_x(f)))(\la)=(f(0),\ldots, {f^{(n)}(0)}/{n!},\ldots)\quad\text{and}\quad (\eta_1(\Phi_y(g)))(\la)=(g(\la),0,0,\ldots) 
$$
for every $f\in\CC[x]$ and $g\in\CC[y]$. Since $f_k(0)=g_k(0)$ for every $k$, the first equality in the lemma follows from~\eqref{RPhixy} and the formula of matrix multiplication.

Similarly, $\pi'_\mu(u)$ is the operator of weighted right shift with the weight sequence $(\mu q, \mu q^2, \ldots)$. The only non-zero entry of $\pi'_\mu(u^n)$ in the first row is $\mu^n q^{n(n+1)/2}$ at the $n$th place, and
$$
(\eta_2(\Phi_x(f)))(\mu)=(f(\mu),0,0,\ldots)^T\quad\text{and}\quad  (\eta_2(\Phi_y(g)))(\mu)=(g(0),\ldots, {g^{(n)}(0)}/{n!},\ldots)^T.
$$
So we get the second equality in the same way as the first.
\end{proof}

We use the standard family of norms on $\cO(\CC)$: $\|f\|_{\rho}\!:= \sup\{|f(z)|\!:\, |z|\le \rho\}$ ($\rho>0$). The following estimate is technical.

\begin{lm}\label{hnset}
Put $W_n= W_n(\bar h)$ and $\|\cdot\|=\|\cdot\|_{\rho}$ for fixed $\bar h$ and $\rho$. Then 
\begin{equation}\label{fhnset}
\|h_n\|\,\rho^n |q|^{n(n+1)/2}\le \frac 32\, \|W_{n} \|+ \left(\frac 32 \right)^2\,\left[ \sum_{k=0}^{n-1} \left(\frac 52 \right)^{k}\frac{\|W_{n-1-k} \|}{\rho^{k+1}}\right]
\end{equation}
for every $n\in\Z_+$.
\end{lm}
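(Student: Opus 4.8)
The plan is to invert the triangular system that defines the operators $W_n$ and then read the estimate off from Cauchy's inequality. First I would introduce, for an entire function $f$, the notation $[f]_j$ for the coefficient of $z^j$ in the Taylor expansion of $f$ at the origin, and set $v_k(z):=q^{k(k+1)/2}z^kh_k(z)$. On the circle $|z|=\rho$ one has $|v_n(z)|=|q|^{n(n+1)/2}\rho^n|h_n(z)|$, so the maximum principle gives $\|v_n\|=|q|^{n(n+1)/2}\rho^n\|h_n\|$; thus the left-hand side of \eqref{fhnset} equals $\|v_n\|$, and the whole task is to bound $\|v_n\|$ in terms of the $\|W_m\|$. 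The key preliminary fact is the bookkeeping identity
\[
W_m=v_m+\sum_{k=0}^{m}[v_k]_m\,z^k\qquad(m\in\Z_+),
\]
which is immediate from the definition of $W_m$, since $\tfrac{h_k^{(m-k)}(0)}{(m-k)!}\,q^{k(k+1)/2}=[z^kh_k]_m\,q^{k(k+1)/2}=[v_k]_m$.

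Next I would extract from this identity the behaviour of individual Taylor coefficients. The polynomial $\sum_{k=0}^{m}[v_k]_m z^k$ has degree at most $m$, so comparing coefficients of $z^n$ on both sides gives $[W_m]_n=[v_m]_n$ for $n>m$, while for $n=m$ only the $k=m$ term contributes on the right, yielding the ``diagonal doubling'' $[W_m]_m=2[v_m]_m$. Feeding both facts back into the identity for $W_n$ produces the inversion formula
\[
v_n=W_n-\sum_{k=0}^{n-1}[W_k]_n\,z^k-\tfrac12[W_n]_n\,z^n .
\]
Then I would pass to norms on the circle $|z|=\rho$: the triangle inequality, the maximum principle for $v_n$, and Cauchy's estimate $|[W_k]_n|\le\|W_k\|\,\rho^{-n}$ give
\[
\|v_n\|\le\|W_n\|+\sum_{k=0}^{n-1}|[W_k]_n|\,\rho^{k}+\tfrac12|[W_n]_n|\,\rho^{n}\le\tfrac32\|W_n\|+\sum_{k=0}^{n-1}\frac{\|W_k\|}{\rho^{n-k}} .
\]
Using $\|v_n\|=|q|^{n(n+1)/2}\rho^n\|h_n\|$, reindexing the last sum by $k\mapsto n-1-k$, and bounding each coefficient crudely by $1\le(\tfrac32)^2(\tfrac52)^{k}$ then gives \eqref{fhnset}; in fact this route proves an inequality somewhat sharper than the stated one.

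I do not expect a genuine obstacle here — the argument is little more than bookkeeping — but the one step requiring care is the diagonal doubling $[W_m]_m=2[v_m]_m$: it is exactly what forces the coefficient $\tfrac32$ rather than $1$ in front of $\|W_n\|$, and without it the $k=n$ term would reproduce $\|v_n\|$ itself and the estimate would collapse. I would also remark that the precise constants $\tfrac32$ and $\tfrac52$ in \eqref{fhnset} are most naturally explained by a purely recursive variant of the above: bounding instead, by Cauchy's inequality, the Taylor coefficients of the $h_k$ that occur inside $W_n$ leads to the recursion $a_n\le\tfrac32\|W_n\|+\sum_{k=0}^{n-1}\rho^{-(n-k)}a_k$ for $a_n:=\|h_n\|\,\rho^n|q|^{n(n+1)/2}$, and an induction on $n$ — in which $\tfrac52$ is chosen so that $\tfrac32-\tfrac23(\tfrac32)^2=0$, i.e.\ so that the estimate closes with no slack — delivers \eqref{fhnset} in exactly the stated form. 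Either route works; the first is shorter, the second reproduces the stated constants.
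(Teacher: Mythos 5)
Your proof is correct, and it takes a genuinely different route from the paper. The paper argues by induction on $n$: from $|h(0)|\le\|h+h(0)\|/2$ it gets $\|h_n\|\le\frac32\|h_n+h_n(0)\|$, isolates $h_n+h_n(0)$ from the definition of $W_n$, applies the Cauchy inequalities to the Taylor coefficients of the \emph{earlier} $h_k$, and then feeds in the induction hypothesis and re-sums a geometric series — this is exactly where the constants $\left(\frac32\right)^2$ and $\left(\frac52\right)^k$ come from (and it matches the recursive variant you sketch at the end, except that the paper's recursion carries a factor $\frac32$ on the sum as well, which is why it closes with equality at $\frac52$). Your main argument instead inverts the triangular system in closed form: writing $v_k=q^{k(k+1)/2}z^kh_k$, you note $[W_k]_n=[v_k]_n$ for $k<n$ and the diagonal doubling $[W_n]_n=2[v_n]_n$, obtain $v_n=W_n-\sum_{k=0}^{n-1}[W_k]_nz^k-\frac12[W_n]_nz^n$, and apply Cauchy's estimate to the $W_k$ themselves. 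All steps check out (the identity $\frac{h_k^{(n-k)}(0)}{(n-k)!}q^{k(k+1)/2}=[v_k]_n$ and the equality $\|v_n\|_\rho=|q|^{n(n+1)/2}\rho^n\|h_n\|_\rho$ via the maximum principle are exactly right), and you correctly flag the doubling as the point that prevents the estimate from collapsing. What your route buys is a non-recursive and strictly sharper bound, with coefficient $1$ in place of $\left(\frac32\right)^2\left(\frac52\right)^k$, which trivially implies \eqref{fhnset}; it would even let the subsequent proof of Theorem~\ref{AMqupl}(A) work for any $\rho>1$ rather than $\rho>5/2$. What the paper's induction buys is that it never needs the inversion formula, only the crude $\frac32$-trick plus Cauchy on the $h_k$, at the cost of the exponentially growing constants.
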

\begin{proof}
We proceed by induction.
First we have from $|h(0)|\le  \|h+h(0)\|/2$ that
\begin{equation}\label{32est}
\|h\|\le \|h+h(0)\|+|h(0)|\le \frac 32\, \|h+h(0)\|
\end{equation}
for each $h\in\cO(\CC)$. In particular, 
$$
\|h_0\|\le \frac 32\, \|h_0+h_0(0)\|=\frac 32\, \|W_0\|,
$$
i.e., \eqref{fhnset} holds when $n=0$.

Now assume that \eqref{fhnset} holds  for all non-negative integers less than $n$.
By the definition of $W_{n} $,
$$
h_n(z)+h_n(0)=z^{-n} q^{-n(n+1)/2}\left(W_{n} - \sum_{k=0}^{n-1} \frac{h_k^{(n-k)}(0)}{(n-k)!}z^k q^{k(k+1)/2)}\right).
$$
It follows from this equality and \eqref{32est} that
$$
\|h_n\|\le
  \frac 32\, \|h_n+h_n(0)\| \le
   \frac 32\,\rho^{-n} |q|^{-n(n+1)/2}\left(\|W_{n} \|+ \sum_{k=0}^{n-1} \frac{|h_k^{(n-k)}(0)|}{(n-k)!}\rho^k |q|^{k(k+1)/2}\right).
$$

The Cauchy inequalities states that 
$$\frac{|h^{(n-k)}(0)|}{(n-k)!} \le \frac{\|h\|}{\rho^{n-k}}$$ for every $h\in\cO(\CC)$. Therefore by the  induction hypothesis,
\begin{multline*}
\|h_n\|\rho^{n} |q|^{n(n+1)/2}\le\frac 32\left(\|W_{n} \|+ \sum_{k=0}^{n-1} \|h_k\|\,\rho^{2k-n} |q|^{k(k+1)/2}\right)\\
\le\frac 32\left(\|W_{n} \|+\sum_{k=0}^{n-1} \rho^{k-n} \left(\frac 32\, \|W_{k} \|+ \left(\frac 32\right)^2\,\left[  \sum_{j=0}^{k-1} \left(\frac 52 \right)^{j}\frac{\|W_{k-1-j} \|}{\rho^{j+1}}\right]\right)\right).
\end{multline*} 

Now we sum like terms. The coefficient at $\|W_m\|$ is equal to 
$$
\left(\left(\frac 32\right)^2+
\left(\frac 32\right)^3 \left(\sum_{i=0}^{n-m-2} \left(\frac 52 \right)^{i} \right)\right)\rho^{m-n}=\left(\frac 32\right)^2 \left(\frac 52 \right)^{n-m-1}\rho^{m-n}
$$
for $m=0,\ldots,n-1$ and to $3/2$ for $m=n$. By putting $k=n-m-1$, we obtain~\eqref{fhnset}.
\end{proof}


\begin{proof}[Proof of Theorem~\ref{AMqupl}]
(A)~It suffices to show that the map $\rho$ defined in \eqref{rhodef} is topologically injective.  Moreover, since $\cO(\CC,\ell_1)\times\cO(\CC,\ell_1)$ is a direct summand in $ \cO(\CC,\B(c_0))\times \cO(\CC,\B(\ell_1))$ it suffices to prove that the map $\eta$ defined in \eqref{etadef} is topologically injective; see, e.g. \cite[Lemma 4.4]{ArNew}.

We need an explicit description of the topologies on the  target and source spaces in~\eqref{etadef}.
Note that $\cO(\Om)$ is a closed subspace of $\cO(\CC)\times \cO(\CC)$.
Writing an element of $\cO(\Om)$  as $h=(f,g)$, where $f(0)=g(0)$, we conclude that the topology on $\cO(\Om)$ is determined by the  family
$((f,g)\mapsto \max\{\|f\|_{\rho},\|g\|_{\rho}\},\,\rho>0)$, where $\|f\|_{\rho}\!:= \sup\{|f(z)|\!:\, |z|\le \rho\}$. We identify $\cO(\CC,\ell_1)\times\cO(\CC,\ell_1)$ with $\ell_1[\cO(\CC)\times\cO(\CC)]$ endowed with the following family of seminorms:
$$
\|(\bar f,\bar g)\|^\sim_{\rho}\!:=\sum_n\max \{\|f_n\|_\rho,\,\|g_n\|_\rho\}\qquad(\rho>0).
$$ 

On the other hand, recall that \eqref{Bq12} holds. 
Since $\mathfrak{B}_{|q|^{1/2}}$ is a K\"{o}the space, it follows from~\cite[Theorem 1]{Pie} that $\cO(\Om)\ptn\mathfrak{B}_{|q|^{1/2}}$ is a vector-valued K\"{o}the space of the form $\la_1[\cO(\Om)]$. This means that $\cO(\Om)\ptn\mathfrak{B}_{|q|^{1/2}}$ is topologically isomorphic to
the space of all $\cO(\Om)$-valued sequences $a=((f_n,g_n);\,n\in\Z_+)$ such that
\begin{equation}\label{rhor}
|a|_{\rho,r}\!:=\sum_{n=0}^\infty \max\{\|f_n\|_{\rho},\,\|g_n\|_{\rho}\}\,r^n |q|^{n^2/2}<\infty\qquad\text{for all $\rho,r>0$.}
\end{equation}
and the topology is determined by the family $\{|\cdot|_{\rho,r},\,\rho,r>0\}$.

To prove the topological injectivity it suffices to consider the case $r=\rho|q|^{1/2}$. Then~\eqref{rhor} takes the form
$$
\sum_{n=0}^\infty \max\{\|f_n\|_{\rho},\,\|g_n\|_{\rho}\}\,\rho^n |q|^{n(n+1)/2}.
$$
To complete the proof of Part~(A) we need to estimate this series.

Applying Lemma~\ref{hnset}, we get that it does not exceed
\begin{equation*}
\sum_{n=0}^\infty\left[\frac 32\, \max\{\|W_{n}(\bar f)\|_\rho,\,\|W_{n}(\bar g)\|_\rho\}
+ \left(\frac 32 \right)^2\left[\sum_{k=0}^{n-1} \left(\frac 52 \right)^{k}\frac{\max\{\|W_{n-1-k}(\bar f) \|_\rho,\,\|W_{n-1-k}(\bar g) \|_\rho\}}{\rho^{k+1}}\right]\right],
\end{equation*}
where $\bar f=(f_n)$ and $\bar g=(g_n)$.
When $\rho>5/2$,   the sum 
is not less than $$C\sum_n\max \{\|W_{n}(\bar f)\|_\rho,\,W_{n}(\bar g)\|_\rho\}$$ for some $C>0$. 
Hence, $|a|_{\rho,\,\rho|q|^{1/2}}\le C  \|(W_{n}(\bar f),W_{n}(\bar g))\|^\sim_{\rho}$ for every $a\in\mathcal{R}(\CC^2_q)$.
Finally, it follows from Lemma~\ref{eta12} that $\eta$ is topologically injective.


(B) Let $B$ be a Banach algebra and  $\phi\!:\mathcal{R}(\CC^2_q)\to B$ a homomorphism. Denote by $\te_x$ and $\te_y$ the global holomorphic functional calculi  (i.e., continuous homomorphisms from $\cO(\CC)$ to $B$) corresponding to $\phi(x)$ and $\phi(y)$. Recall that $\cO(\Om)$ consists of pairs $(f,g)$ of entire functions such that $f(0)=g(0)$ and consider the continuous linear map
$$
\phi_1\!:\cO(\Om)\to B\!: (f,g)\mapsto \te_x(f)+\te_y(g)-f(0).
$$

Further, since $\phi(x)\phi(y)=q\phi(y)\phi(x)$, it follows from Lemma~\ref{sile} that  $\|\phi(u)^n\|^{1/n}=O(|q|^{n/2})$ as $n\to\infty$. So by Corollary~\ref{PexunCom2},
there is a unique continuous unital homomorphism
$\phi_2\!:\mathfrak{B}_{|q|^{1/2}}\to B$ that maps $z$ to $\phi(u)$. Take the composition of the tensor product of $\phi_1$ and $\phi_2$ and the linearization of the multiplication in~$B$,
$$
\wh\phi\!:\cO(\Om)\ptn\mathfrak{B}_{|q|^{1/2}}\to B\ptn B\to B.
$$
It is easy to see that $\wh\phi\io_1=\phi$, where $\io_1$ is defined in~\eqref{iodef}. It follows from Part~(A) that $\io_1$ is a homomorphism and $\wh\phi$ is a continuous homomorphism. Since the range of $\io_1$ is dense in $\cO(\Om)\ptn\mathfrak{B}_{|q|^{1/2}}$, such $\wh\phi$ is unique. Thus, the desired universal property holds and so $\cO(\Om)\ptn\mathfrak{B}_{|q|^{1/2}}$ is the Arens--Micheal envelope of $\cO(\CC^2_q)$.
\end{proof}

\section{The envelope with respect to the class of Banach PI algebras}

Before turning to the proof of Theorem~\ref{AMquplPI}, which describes the structure of  $\cO(\CC^2_q)^{\mathrm{PI}}$, the envelope of  $\mathcal{R}(\CC^2_q)$ with respect to the class of Banach PI algebras, we recall some preliminaries about general PI algebras.

It follows from the deep Braun--Kemer--Razmyslov theorem \cite[p.\,149, Theorem 4.0.1]{KKR16} that the (Jacobson) radical of a finitely generated PI algebra over $\CC$ is nilpotent. Of course, not each Banach PI algebra is finitely generated and there exist Banach PI  (e.g., commutative) algebras with non-nilpotent radical. Nevertheless, we can apply the mentioned theorem in the quantum plane case using the following well-known result, which is a simple consequence of famous Kaplansky's theorem; see, e.g., \cite[Theorem 1.11.7]{GZ05}.


\begin{pr}\label{KaplCC}
Every irreducible representation of a PI algebra is  finite dimensional.
\end{pr}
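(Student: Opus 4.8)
The plan is to derive Proposition~\ref{KaplCC} from Kaplansky's theorem, which describes primitive PI algebras. Recall that Kaplansky's theorem asserts that a primitive PI algebra is a simple algebra finite-dimensional over its center; more precisely, if $A$ is primitive and satisfies a polynomial identity of degree $d$, then $A\cong M_k(\mathbb D)$ for a division ring $\mathbb D$ with $k\le d/2$, and $\mathbb D$ is finite-dimensional over its center. So the first step is to reduce the statement about irreducible representations to a statement about primitive algebras: given an irreducible representation $\pi\!:A\to\operatorname{End}_{\mathbb C}(V)$, let $A'=\pi(A)$ be its image. Then $A'$ is a primitive algebra (it acts faithfully and irreducibly on $V$), and it still satisfies every polynomial identity of $A$, so $A'$ is a primitive PI algebra.

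Next I would apply Kaplansky's theorem to $A'$ to conclude that $A'\cong M_k(\mathbb D)$ with $\mathbb D$ a division ring finite-dimensional over its center $Z=Z(\mathbb D)$, and $k<\infty$. The irreducible module $V$ over $A'$ is then, up to isomorphism, the column space $\mathbb D^k$, so as a $Z$-vector space $\dim_Z V=k\cdot\dim_Z\mathbb D<\infty$. The only remaining point is to pass from finite-dimensionality over $Z$ to finite-dimensionality over $\mathbb C$, i.e., to show that the center $Z$ of $\mathbb D$ is finite-dimensional over $\mathbb C$. This is where the ground field being $\mathbb C$ — in particular algebraically closed — enters: $Z$ is a field extension of $\mathbb C$, and $\mathbb D$ being finite-dimensional over $Z$ forces every element of $\mathbb D$, hence of $Z$, to be algebraic over $\mathbb C$; since $\mathbb C$ is algebraically closed, $Z=\mathbb C$, and therefore $\mathbb D=\mathbb C$ as well. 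Hence $A'\cong M_k(\mathbb C)$ and $\dim_{\mathbb C}V=k<\infty$.

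The structural reductions here are all routine; the only genuine input is Kaplansky's theorem itself, which I would simply quote from \cite[Theorem 1.11.7]{GZ05} rather than reprove. The step most deserving of care — and the one I would flag as the main (mild) obstacle — is the final descent of the center to $\mathbb C$: one must make sure the argument "every element of $\mathbb D$ is algebraic over its center, the center is algebraic over $\mathbb C$, $\mathbb C$ algebraically closed hence $Z=\mathbb C$" is correctly assembled, since it is precisely this use of algebraic closedness that makes the statement work over $\mathbb C$ while it would fail (with $\mathbb C$ replaced by, say, $\mathbb Q$, where infinite-dimensional-over-$\mathbb Q$ number fields give irreducible representations that are not finite-dimensional over $\mathbb Q$). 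Once that is in place, the proposition follows. Since, as the text remarks, this is a standard and well-known consequence of Kaplansky's theorem, I expect the actual write-up to be a short paragraph citing \cite[Theorem 1.11.7]{GZ05} and spelling out the two-line reduction above.
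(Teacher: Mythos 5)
Your reduction to a primitive PI algebra and the appeal to Kaplansky's theorem follow exactly the route the paper intends (the paper gives no proof at all, only the citation to \cite[Theorem 1.11.7]{GZ05}), but your final step --- the descent of the center to $\CC$ --- has a genuine gap. Kaplansky's theorem makes the division ring $\mathbb D$, and hence $A'$ and $V$, finite-dimensional over the center $Z$ of $A'$; that yields algebraicity over $Z$, not over $\CC$. Nothing in your hypotheses forces $Z$ to be algebraic over $\CC$: it is just some extension field of $\CC$ and may well be transcendental, so the chain ``algebraic over $Z$, $Z$ algebraic over $\CC$, hence $Z=\CC$'' breaks at its middle link. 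Indeed the proposition, read literally for an arbitrary PI algebra over $\CC$, is false: $A=\CC(t)$ is commutative, hence PI, and its regular representation on itself is irreducible (the only ideals are $0$ and $\CC(t)$) yet infinite-dimensional over $\CC$. So algebraic closedness alone cannot rescue the argument; one needs a finiteness hypothesis on the algebra, which is implicitly available in the paper because Proposition~\ref{KaplCC} is only applied (in Lemma~\ref{reprone}) to quotients of $\mathcal{R}(\CC^2_q)$, which are finitely generated and of countable dimension over $\CC$.

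With that extra hypothesis your argument is easily repaired. Since $A'=\pi(A)$ has countable dimension over $\CC$, so does its center $Z$; a field extension of $\CC$ of countable dimension is algebraic over $\CC$ (for $t$ transcendental the elements $(t-\la)^{-1}$, $\la\in\CC$, are linearly independent, giving uncountable dimension), hence $Z=\CC$, and then Kaplansky gives $\dim_\CC A'<\infty$ and $\dim_\CC V<\infty$. Equivalently one can argue via Schur's lemma that $\mathrm{End}_{A'}(V)=\CC$ and embed $Z$ there, or, for affine PI algebras, use the Artin--Tate lemma plus the Nullstellensatz to get $Z=\CC$. Any of these patches should be stated explicitly (or the proposition restricted to finitely generated algebras, which is all the paper uses); as written, your key step is a non sequitur, and your own remark about $\Q$ shows the phenomenon you need to exclude, which in fact also occurs over $\CC$.
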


This proposition gives a restriction on the structure of PI quotients.

\begin{lm}\label{reprone}
Every PI quotient of $\mathcal{R}(\CC^2_q)$ is commutative modulo radical.
\end{lm}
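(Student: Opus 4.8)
The plan is to combine Proposition~\ref{KaplCC} with the structure of finite-dimensional representations of $\mathcal{R}(\CC^2_q)$. Let $A=\mathcal{R}(\CC^2_q)/J$ be a PI quotient, with the canonical images of $x$ and $y$ still denoted by $x$ and $y$. I want to show $A/\operatorname{rad}A$ is commutative. Since $A/\operatorname{rad}A$ is semisimple PI, it embeds into a product of its irreducible quotients, so it suffices to show that \emph{every} irreducible representation $\pi$ of $A$ (equivalently, of $\mathcal{R}(\CC^2_q)$ that factors through $A$) has commuting image, i.e. $\pi(x)\pi(y)=\pi(y)\pi(x)$. By Proposition~\ref{KaplCC} such a $\pi$ is finite dimensional, say on $V=\CC^d$, and irreducible.

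The key step is the following elementary linear-algebra fact about a pair of operators $S=\pi(x)$, $T=\pi(y)$ on a finite-dimensional space satisfying $ST=qTS$. First I would observe that $S$ is invertible: if not, $\ker S$ is a nonzero subspace, and from $ST=qTS=q TS$ one gets, for $v\in\ker S$, $S(Tv)=q^{-1}? $ — more precisely $STv = qTSv = 0$, so $T(\ker S)\subseteq\ker S$; likewise $ST^kv=q^k T^k Sv=0$, so $\ker S$ is $T$-invariant, hence (being $S$-invariant trivially as $S$ kills it) a nonzero proper invariant subspace unless $S=0$, contradicting irreducibility — and if $S=0$ the image is generated by $T$ alone, which commutes with $S=0$, so the image is commutative anyway. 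So assume $S$ is invertible; then from $ST=qTS$ we get $S T S^{-1}=qT$, so conjugation by $S$ multiplies every eigenvalue of $T$ by $q$. Since $T$ has finitely many eigenvalues and $q\neq 1$ (this is where the hypothesis, already imposed in Theorem~\ref{AMquplPI}(B) and implicitly here, or rather: the lemma is presumably stated under $q\neq 1$, which I would note; if $q=1$ the algebra is already commutative), and no eigenvalue can be multiplied by $q$ indefinitely to stay in a finite set unless it is $0$, the only eigenvalue of $T$ is $0$, i.e. $T$ is nilpotent. Then $\ker T$ is a nonzero subspace which, by the symmetric argument using $ST=qTS\Rightarrow S(\ker T)\subseteq\ker T$, is $S$-invariant; by irreducibility $\ker T=V$, so $T=0$. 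Hence $\pi(y)=0$ and in particular $\pi(x)\pi(y)=0=\pi(y)\pi(x)$: the image is commutative.

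Assembling: every irreducible representation of $A$ has commutative image, so the intersection of the kernels of all irreducibles — which for a PI (hence Jacobson-semisimple-on-the-quotient) algebra is $\operatorname{rad}A$, or more directly $A/\operatorname{rad}A$ is a subdirect product of the finite-dimensional simple images, each a matrix algebra $M_{d_i}(\CC)$ but with commutative image forcing $d_i=1$ — is contained in every ideal $I$ with $A/I$ commutative; thus $A/\operatorname{rad}A$ is commutative, which is the assertion.

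I expect the main obstacle to be the bookkeeping around degenerate cases ($S$ or $T$ equal to $0$, the reduction from ``semisimple PI quotient'' to its irreducible constituents, and making sure one is allowed to pass $\operatorname{rad}$ of $A$ versus working inside $\mathcal{R}(\CC^2_q)$), together with the clean statement that a matrix algebra with a noncommutative generating pair cannot have $d=1$. The genuinely mathematical content — that $ST=qTS$ with $q\neq1$ forces one of $S,T$ to vanish on an irreducible module — is short once one uses the eigenvalue-scaling trick; the rest is organizing the standard PI/semisimplicity facts (Proposition~\ref{KaplCC} and the Braun--Kemer--Razmyslov nilpotency already recalled) into the desired conclusion.
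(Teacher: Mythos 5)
Your route is genuinely different from the paper's: instead of quoting the Brown--Goodearl classification of primitive ideals of $\mathcal{R}(\CC^2_q)$ (whose quotients are one-dimensional or infinite-dimensional) and combining it with Proposition~\ref{KaplCC}, you keep Kaplansky to get finite-dimensionality and then analyse a finite-dimensional irreducible pair $S,T$ with $ST=qTS$ by hand. The reduction to irreducible representations, and the kernel-invariance treatment of the non-invertible case, are fine. But the key eigenvalue step has a genuine gap: from $STS^{-1}=qT$ you only learn that $\sigma(T)$ is invariant under multiplication by $q$, and your assertion that no eigenvalue can stay in a finite set under repeated multiplication by $q$ unless it is $0$ is false when $q$ is a root of unity. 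This is not a repairable slip in your argument, because for a primitive $n$-th root of unity $q$ ($n\ge 2$) the conclusion of the lemma itself fails: take $T=\operatorname{diag}(1,q,\dots,q^{n-1})$ and $S$ the cyclic shift on $\CC^n$; then $ST=qTS$, the pair acts irreducibly, so it generates $M_n(\CC)$, which is a finite-dimensional (hence PI), semisimple, noncommutative quotient of $\mathcal{R}(\CC^2_q)$. So your claim that ``$q\ne 1$'' is the only hypothesis needed is wrong; your argument actually proves the statement exactly when $q$ is not a root of unity (for $|q|\ne 1$, or $|q|=1$ with $q$ of infinite multiplicative order, a nonzero eigenvalue would generate an infinite $q$-orbit inside the finite spectrum, which is the correct form of your scaling argument).

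In fairness, the same restriction is hidden in the paper's own proof: the cited list of primitive ideals (Brown--Goodearl, Part~II) is established for $q$ not a root of unity, so Lemma~\ref{reprone} should be read with that hypothesis (the case $q=1$ being trivial), and within that scope your elementary argument is a correct, self-contained replacement for the citation --- arguably a gain, since it avoids importing the primitive-ideal classification. Two smaller remarks: you do not need Braun--Kemer--Razmyslov anywhere in this lemma (the paper uses it only later, in Proposition~\ref{XYnil}), so its appearance in your summary is superfluous; and once the image of an irreducible representation is commutative, Schur's lemma immediately gives dimension one, which shortens your final assembling step. The one thing you must fix is to state and use the non-root-of-unity hypothesis where your eigenvalue-scaling step needs it.
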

\begin{proof}
A complete list of primitive ideals of $\mathcal{R}(\CC^2_q)$ is given in \cite[p.\,136, Example II.1.2 and p.\,194, Example II.7.2]{BGo02}. The corresponding quotients are one dimensional or infinite dimensional. Therefore by Proposition~\ref{KaplCC}, a PI quotient may have only one-dimensional irreducible representations. Thus every PI quotient of $\mathcal{R}(\CC^2_q)$  is commutative modulo radical.
\end{proof}


The following proposition is an analogue of Lemma~\ref{sile} but we do not even assume that $X$ and $Y$ are elements of a Banach algebra.

\begin{pr}\label{XYnil}
Let $X$ and $Y$ be elements of a PI algebra such that $XY=qYX$ for some $q\ne 1$. Then $XY$ is nilpotent.
\end{pr}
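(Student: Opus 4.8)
The plan is to reduce to the finitely generated case and then exploit Lemma~\ref{reprone}. Let $A$ be a PI algebra containing $X$ and $Y$ with $XY=qYX$, $q\ne 1$. It suffices to show $XY$ is nilpotent in the subalgebra $A_0$ generated by $X$ and $Y$, and $A_0$ is a finitely generated PI algebra (a subalgebra of a PI algebra is PI). There is a surjective homomorphism $\mathcal{R}(\CC^2_q)\to A_0$ sending the generators to $X,Y$, so $A_0$ is a PI quotient of $\mathcal{R}(\CC^2_q)$. By Lemma~\ref{reprone}, $A_0$ is commutative modulo its radical $\operatorname{rad}A_0$; that is, every commutator, in particular $XY-YX=(1-q^{-1})\,XY\cdot(\text{unit issue})$—more precisely $XY-YX$—lies in $\operatorname{rad}A_0$.

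Now observe $XY-YX = XY - q^{-1}XY = (1-q^{-1})XY$, and since $q\ne 1$ and $q\ne 0$, the scalar $1-q^{-1}$ is nonzero, so $XY \in \operatorname{rad}A_0$. Since $A_0$ is a finitely generated PI algebra over $\CC$, the Braun--Kemer--Razmyslov theorem (invoked in the excerpt via \cite[p.\,149, Theorem 4.0.1]{KKR16}) tells us that $\operatorname{rad}A_0$ is nilpotent. Hence $XY$, being an element of a nilpotent ideal, is nilpotent.

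I would carry the steps out in this order: (1) pass to the finitely generated subalgebra $A_0$ generated by $X$ and $Y$, noting it is PI and a quotient of $\mathcal{R}(\CC^2_q)$; (2) apply Lemma~\ref{reprone} to conclude $A_0$ is commutative modulo radical, so $[X,Y]\in\operatorname{rad}A_0$; (3) rewrite $[X,Y]=(1-q^{-1})XY$ using the defining relation and conclude $XY\in\operatorname{rad}A_0$ because $q\ne 1$; (4) apply Braun--Kemer--Razmyslov to get $\operatorname{rad}A_0$ nilpotent, hence $XY$ nilpotent. The only point requiring care—and the main (though minor) obstacle—is justifying the passage to $A_0$ and the use of the quotient structure: one must be sure that ``PI'' is inherited by subalgebras and that Lemma~\ref{reprone} genuinely applies to $A_0$ as stated (it does, since $A_0$ is by construction a quotient of $\mathcal{R}(\CC^2_q)$). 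Everything else is a one-line scalar manipulation using $q\notin\{0,1\}$.
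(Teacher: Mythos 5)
Your proposal is correct and follows essentially the same route as the paper: pass to the subalgebra generated by $X$ and $Y$, note it is a PI quotient of $\mathcal{R}(\CC^2_q)$, apply Lemma~\ref{reprone} so that $XY=(1-q^{-1})^{-1}[X,Y]$ lies in the radical, and conclude by the Braun--Kemer--Razmyslov theorem that this radical is nilpotent. The only cosmetic remark is that the rewriting needs $q\ne 0$ (as in the paper's own proof), but the case $q=0$ gives $XY=0$ and is trivial.
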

\begin{proof}
Let $A$ be the subalgebra generated by $X$ and $Y$. Obviously,  $A$ also satisfies a PI. Being a quotient of $\mathcal{R}(\CC^2_q)$, the algebra $A$ is commutative modulo radical by Lemma~\ref{reprone}. Since $XY=(1-q^{-1})^{-1}[X,Y]$, we have that  $XY$ belongs to the commutant and hence the radical. As mentioned above, the Braun--Kemer--Razmyslov theorem \cite[p.\,149, Theorem 4.0.1]{KKR16} implies that the radical of a finitely generated PI algebra over $\CC$ is nilpotent. In particular, $XY$ is nilpotent.
\end{proof}

We need a pair of results on locally BPI algebras.

\begin{pr}\label{subapr}
Every closed subalgebra of a product of locally BPI algebras is a locally BPI algebra.
\end{pr}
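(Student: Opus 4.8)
The plan is to reduce to the standard reformulation of ``locally BPI'' in terms of seminorms: a topological algebra is locally BPI if and only if it is complete and its topology is given by a family of submultiplicative seminorms each of whose Banach-algebra completions lies in $\mathsf{PI}$. (One direction is immediate from the definition of a projective limit; for the other, a complete algebra carrying such a family is canonically the projective limit of those completions, which are Banach PI algebras.) So let $(A_\lambda)_{\lambda\in\Lambda}$ be locally BPI algebras, $A=\prod_\lambda A_\lambda$ their product, and $B\subseteq A$ a closed subalgebra; it suffices to exhibit such a family of seminorms on $B$ and to check completeness.

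First I would note that $B$ is complete: each $A_\lambda$, being a projective limit of Banach algebras, is complete, hence so is the product $A$, and a closed subspace of a complete locally convex space is complete. Next, fix for each $\lambda$ a defining family $(\|\cdot\|_{\lambda,\alpha})_\alpha$ of submultiplicative seminorms on $A_\lambda$ whose completions $\widehat{A_\lambda}^{\,\alpha}$ are Banach PI algebras, and set $p_{\lambda,\alpha}:=(\|\cdot\|_{\lambda,\alpha}\circ\pi_\lambda)|_B$, where $\pi_\lambda\colon A\to A_\lambda$ is the coordinate projection. These seminorms are submultiplicative and define the subspace topology of $B$. The key point is to identify the completion: the homomorphism $B\to A_\lambda\to\widehat{A_\lambda}^{\,\alpha}$ sends $b$ to an element of norm $p_{\lambda,\alpha}(b)$, so it induces an isometric algebra homomorphism $B/\ker p_{\lambda,\alpha}\hookrightarrow\widehat{A_\lambda}^{\,\alpha}$, and passing to completions realizes $\widehat{B}^{\,p_{\lambda,\alpha}}$ isometrically as the closure of the image, i.e. as a closed subalgebra of $\widehat{A_\lambda}^{\,\alpha}$. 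Since any subalgebra of a Banach PI algebra satisfies the same polynomial identity, $\widehat{B}^{\,p_{\lambda,\alpha}}\in\mathsf{PI}$. By the reformulation above, $B\cong\varprojlim_{\lambda,\alpha}\widehat{B}^{\,p_{\lambda,\alpha}}$ is locally BPI.

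The one place that requires care --- and the only non-formal step --- is the identification of $\widehat{B}^{\,p_{\lambda,\alpha}}$ with a closed subalgebra of $\widehat{A_\lambda}^{\,\alpha}$: this works precisely because $p_{\lambda,\alpha}(b)$ equals the norm of the image of $b$ in $\widehat{A_\lambda}^{\,\alpha}$, which is why one restricts the seminorms already present on the product rather than introducing new ones on $B$. Everything else --- submultiplicativity, that the $p_{\lambda,\alpha}$ generate the topology, and the PI property being inherited by subalgebras --- is routine.
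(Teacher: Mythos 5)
Your argument is, in substance, the same as the paper's: the paper reduces the statement to the characterisation (imported from its references) that an Arens--Michael algebra is locally BPI if and only if it is topologically isomorphic to a closed subalgebra of a product of Banach PI algebras, and your seminorm reformulation is exactly that characterisation written out explicitly, with the PI property of the completions $\widehat{B}^{\,p_{\lambda,\alpha}}$ obtained, as there, from the fact that a closed subalgebra of a Banach PI algebra satisfies the same identity.

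One step needs repair, though the repair is routine. The family $\{p_{\lambda,\alpha}\}$ you put on $B$ is not directed: seminorms pulled back through different coordinate projections $\pi_\lambda$ are in general incomparable, so there are no connecting homomorphisms between the corresponding completions, and the claimed isomorphism $B\cong\varprojlim_{\lambda,\alpha}\widehat{B}^{\,p_{\lambda,\alpha}}$ is not literally available as written (the limit of a diagram with no morphisms is the full product, which is strictly larger than $B$). The standard fix is to close the family under finite maxima $q=\max(p_{\lambda_1,\alpha_1},\dots,p_{\lambda_n,\alpha_n})$, which are again submultiplicative; then $B/\ker q$ embeds isometrically into the $\ell_\infty$-sum $\widehat{B}^{\,p_{\lambda_1,\alpha_1}}\oplus\cdots\oplus\widehat{B}^{\,p_{\lambda_n,\alpha_n}}$, a finite product of Banach PI algebras is again a Banach PI algebra (if $f$ and $g$ are identities of two factors written in disjoint variables, then $fg$ is an identity of their product), and subalgebras inherit PI; hence $\widehat{B}^{\,q}\in\mathsf{PI}$, the saturated family is directed, and completeness of $B$ gives $B\cong\varprojlim_q\widehat{B}^{\,q}$. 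With this amendment your proof is complete and parallels the paper's argument, merely making explicit what the paper delegates to \cite{ArOld} and \cite{ArNew}.
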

\begin{proof}
First, using the same argument as for the Part~(B) of Proposition 2.11 in \cite{ArOld} we conclude that
an Arens--Michael algebra is a locally BPI algebra if and only if it is topologically isomorphic to a closed subalgebra of a product of Banach PI algebras. Reasoning in the same way as in the proof of Theorem~2.7 in \cite{ArNew}, one can deduce that a closed subalgebra of a product of locally BPI algebras is also a locally BPI algebra.
\end{proof}
\begin{pr}\label{OCAPI}
If $B$ is a finite-dimensional Banach algebra, then $\cO(\CC,B)$ is a locally BPI algebra.
\end{pr}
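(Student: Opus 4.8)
The plan is to realise $\cO(\CC,B)$ as a closed subalgebra of a countable product of Banach PI algebras and then invoke Proposition~\ref{subapr}.

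The first ingredient is purely algebraic: a finite-dimensional algebra satisfies a polynomial identity. If $d=\dim_\CC B$, then the standard polynomial $s_{d+1}$ of degree $d+1$ is an identity of $B$, since it is multilinear and alternating and hence vanishes on every linearly dependent $(d+1)$-tuple, while any $d+1$ elements of $B$ are linearly dependent. Replacing the norm of $B$ by an equivalent submultiplicative one (all norms on a finite-dimensional algebra being equivalent), we may assume $B$ is a Banach algebra with submultiplicative norm, which does not change the topology on $\cO(\CC,B)$.

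Next, for $n\in\N$ write $\overline{D}_n=\{z\in\CC:|z|\le n\}$ and consider the Banach algebra $C(\overline{D}_n,B)$ of continuous $B$-valued functions with pointwise operations and the supremum norm; submultiplicativity of the norm on $B$ makes it a Banach algebra. Point evaluations at $z\in\overline{D}_n$ assemble into an injective homomorphism of $C(\overline{D}_n,B)$ into the product $\prod_{z}B$, and, since a polynomial identity passes to products and to subalgebras, $C(\overline{D}_n,B)$ again satisfies $s_{d+1}$; thus each $C(\overline{D}_n,B)$ is a Banach PI algebra. The restriction maps then give an algebra homomorphism
$$
\Theta\!:\cO(\CC,B)\to\prod_{n\in\N}C(\overline{D}_n,B),\qquad f\mapsto\bigl(f|_{\overline{D}_n}\bigr)_{n\in\N}.
$$
Because the topology of $\cO(\CC,B)$ is that of uniform convergence on compacta and every compact subset of $\CC$ lies in some $\overline{D}_n$, the map $\Theta$ is a topological embedding (it is injective since an entire function vanishing on a disc is zero). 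Finally, the range of $\Theta$ is closed: a sequence of entire functions whose restrictions converge uniformly on each $\overline{D}_n$ converges locally uniformly on $\CC$, and a locally uniform limit of holomorphic functions is holomorphic, so the limit family comes from an entire function. Proposition~\ref{subapr} now gives that $\cO(\CC,B)$ is a locally BPI algebra.

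The argument is essentially routine; the only step requiring a moment's care --- the ``main obstacle'', such as it is --- is the closedness of the range of $\Theta$, where one must verify that a compatible family $(g_n)$ obtained as a limit in the product genuinely glues to a single $g\in\cO(\CC,B)$, which uses the compatibility $g_{n+1}|_{\overline{D}_n}=g_n$ together with the stability of holomorphy under locally uniform limits. (Alternatively, one can bypass Proposition~\ref{subapr} by writing $\cO(\CC,B)$ directly as the projective limit of its completions in the submultiplicative norms $\|f\|_n=\sup_{|z|\le n}\|f(z)\|_B$, each of which embeds as a closed subalgebra of $C(\overline{D}_n,B)$ and is therefore a Banach PI algebra.)
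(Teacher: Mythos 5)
Your proof is correct, but it takes a genuinely different route from the paper. The paper writes $\cO(\CC,B)\cong\cO(\CC)\ptn B$ as the projective limit of the Banach algebras $A(\overline{\DD}_\rho)\ptn B$ (disc algebras tensored with $B$) and shows each of these is a Banach PI algebra by combining the finite-dimensionality of $B$ with Regev's theorem (a tensor product of PI algebras satisfies a PI), using $\dim B<\infty$ once more to identify $A(\overline{\DD}_\rho)\ptn B$ with $A(\overline{\DD}_\rho)\otimes B$ as an algebra; this exhibits $\cO(\CC,B)$ directly as a projective limit of Banach PI algebras, i.e.\ meets the definition of a locally BPI algebra without any detour. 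You instead produce an explicit identity: $s_{d+1}$ vanishes on the $d$-dimensional algebra $B$ by the alternating/multilinear argument, is inherited pointwise by the Banach algebras $C(\overline{D}_n,B)$, and then you embed $\cO(\CC,B)$ topologically as a closed subalgebra of $\prod_n C(\overline{D}_n,B)$ and invoke Proposition~\ref{subapr}. Your route is more elementary --- it avoids Regev's theorem and the completed-versus-algebraic tensor product identification entirely, at the cost of routing through Proposition~\ref{subapr} (whose proof is itself a characterization argument) rather than the bare definition. Two small remarks: the closedness of the range of $\Theta$, which you single out as the delicate step, is in fact automatic, since $\cO(\CC,B)$ is complete and a complete subspace of a Hausdorff topological vector space is closed (your direct gluing argument is of course also fine); and your parenthetical alternative --- taking the completions of $\cO(\CC,B)$ in the norms $\|f\|_n=\sup_{|z|\le n}\|f(z)\|_B$, which are exactly the $B$-valued disc algebras --- is essentially the paper's argument with the explicit identity $s_{d+1}$ replacing the appeal to Regev.
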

\begin{proof}
First note that $\cO(\CC,B)\cong \cO(\CC)\ptn B$. Denote by $A(\overline{\DD}_\rho)$ the Banach algebra of continuous functions on the closed disc of radius $\rho$ having holomorphic restriction to its interior. Since  $\cO(\CC)$ is a projective limit of the Banach algebras system $(A(\overline{\DD}_\rho);\,\rho\to \infty)$, we have that $\cO(\CC)\ptn B$ is
a projective limit of the system  $(A(\overline{\DD}_\rho)\ptn B;\,\rho\to \infty)$. (Projective limits commutes with $\ptn$.) To complete the proof it suffices to show that each of $A(\overline{\DD}_\rho)\ptn B$ is a PI algebra.

Note that both $A(\overline{\DD}_\rho)$ and $B$ are PI algebras; the former being commutative and the latter being finite dimensional \cite[p.\,21, Corollary 1.2.25]{KKR16}. Therefore by Regev's theorem \cite[p.\,138, Theorem 3.4.7]{KKR16},  $A(\overline{\DD}_\rho)\otimes B$ also satisfies a PI. Finally, since $B$ is finite dimensional, $A(\overline{\DD}_\rho)\ptn B\cong A(\overline{\DD}_\rho)\otimes B$ as an associative algebra.
\end{proof}

We also need the following trivial lemma.
\begin{lm}\label{forseup}
Let $b$ be a nilpotent element of a Banach algebra~$B$. Then there is a unique continuous unital homomorphism $\psi\!:\CC[[z]]\to B$ that maps $z$ to $b$.
\end{lm}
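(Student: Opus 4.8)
The plan is to construct $\psi$ explicitly on polynomials and show the resulting map on all of $\CC[[z]]$ is well-defined, continuous, multiplicative, and unique. First I would recall that $b$ nilpotent means $b^N=0$ for some $N\in\N$. For a formal power series $a=\sum_{n=0}^\infty \al_n z^n$, the element $\sum_{n=0}^\infty \al_n b^n$ is in fact a \emph{finite} sum $\sum_{n=0}^{N-1}\al_n b^n$, so it is a well-defined element of $B$; define $\psi(a)$ to be this element. The map $\psi$ is manifestly linear and sends $1\mapsto 1_B$ and $z\mapsto b$.

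Next I would verify multiplicativity. For $a=\sum \al_n z^n$ and $a'=\sum \al'_m z^m$ the Cauchy product is $aa'=\sum_k \bigl(\sum_{n+m=k}\al_n\al'_m\bigr)z^k$, and applying $\psi$ gives $\sum_k \bigl(\sum_{n+m=k}\al_n\al'_m\bigr)b^k$; since $b^k=0$ for $k\ge N$, this equals $\sum_{n,m}\al_n\al'_m b^{n+m}=\bigl(\sum_n\al_n b^n\bigr)\bigl(\sum_m\al'_m b^m\bigr)=\psi(a)\psi(a')$, using only that the relevant sums are finite so rearrangement is trivial. Thus $\psi$ is a unital homomorphism.

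For continuity I would note that $\CC[[z]]$ carries the topology of coefficientwise convergence (the product topology under $a\mapsto(\al_n)_n$), equivalently the topology making it the Arens--Michael envelope associated to the nilpotency condition; cf.\ the remark in the excerpt that $\CC[[z]]$ is universal for nilpotent elements. Since $\psi(a)$ depends only on the finitely many coefficients $\al_0,\ldots,\al_{N-1}$, the map $\psi$ factors through the finite-dimensional quotient $\CC[z]/(z^N)\to B$, which is automatically continuous, and the quotient map $\CC[[z]]\to\CC[z]/(z^N)$ is continuous; hence $\psi$ is continuous. Finally, uniqueness: any continuous unital homomorphism agreeing with $\psi$ on $z$ agrees with it on the dense subalgebra $\CC[z]\subseteq\CC[[z]]$ (polynomials are dense because the coefficient topology is the inverse limit of the $\CC[z]/(z^N)$), and two continuous maps agreeing on a dense set coincide. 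I do not expect any real obstacle here; the only point needing a sentence of care is pinning down the topology on $\CC[[z]]$ precisely enough that "polynomials are dense" and "finite-dimensional quotient maps are continuous" are legitimate, and this is exactly what makes the lemma "trivial" as the authors say.
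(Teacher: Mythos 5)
Your proof is correct and is exactly the argument the paper evidently has in mind: the paper states this lemma without proof (calling it ``trivial''), and your filling-in --- $\psi(a)=\sum_{n<N}\al_n b^n$, factoring through the finite-dimensional quotient $\CC[z]/(z^N)$ for continuity, and density of $\CC[z]$ in the coefficientwise (product) topology for uniqueness --- is the standard justification. No gaps; the aside identifying the topology with an ``envelope'' topology is unnecessary but harmless, since you work directly with the product topology on $\CC[[z]]$.
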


\begin{proof}[Proof of Theorem~\ref{AMquplPI}]
The argument is very similar to that for Theorem~3.8 in \cite{ArNew}. The difference is
that we replace everywhere $\R$ by $\CC$ and algebras of real-valued functions of class $C^\infty$ by  algebras of holomorphic functions.

(A)~It follows from Proposition~\ref{subapr} that it suffices to construct a homomorphism from $\cR(\CC^2_q)$ to a product of locally BPI algebras and extend it to a topologically injective continuous linear map defined on $\cO(\Om)\ptn\CC[[u]]$.

Let $\rT_p$ denote the algebra of upper triangular (complex) matrices of order $p$. Proposition~\ref{OCAPI} implies that $\cO(\CC,\rT_p)$ is a locally BPI algebra. Using the same construction as in \cite[Theorem~4.3]{ArNew} we obtain a continuous linear map
$$
\rho\!:\cO(\Om)\ptn \CC[[u]]\to \prod_p \cO(\CC,\rT_p)^2
$$
whose restriction to $\cR(\CC^2_q)$ is a homomorphism.  The proof of the topologically injectivity of $\rho$ is also the same  with the only difference that instead of the fact that an ideal in an algebra of type $C^\infty$ generated by a polynomial is closed we use the fact that a similar ideal in an algebra of holomorphic functions is closed; see, e.g., \cite[\S\,V.6.4, p.\,169, Corollary~2]{GR79}.

The proof of Part~(B) is analogous to that of Part~(B) of Theorem~\ref{AMqupl}. The difference is that we assume that $B$ is a PI algebra and use Proposition~\ref{XYnil} (which implies that $\phi(u)$ is nilpotent) instead of Lemma~\ref{sile} and the universal property of $\CC[[u]]$ for nilpotent elements in Banach algebras in Lemma~\ref{forseup}.
\end{proof}

\begin{rem}
Suppose as usual that $xy=qyx$, where $|q|<1$. In \cite[\S\,5.5]{Do24b} Dosi introduced the following algebra (with notation $\Ga(\mathcal{F}_q,\CC^2_{xy})$ and $q^{-1}$ instead of $q$):
$$
\mathcal{F}(\CC^2_q)\!:=\Bigl\{a=\sum_{k,l=0}^\infty
\al_{kl} y^k x^l\!:
 \|a\|'_{r,l}<\infty \;\;\forall r>0,\,l\in\Z_+,\quad
 \|a\|''_{r,k}<\infty \;\;\forall r>0,\,k\in\Z_+\Bigr\},
$$
where
$$
\|a\|'_{r,l}\!:=\sum_{k=0}^\infty |\al_{kl}| r^{k}\quad(r>0,\,l\in\Z_+)\quad\text{and}\quad
\|a\|''_{r,k}\!:=\sum_{l=0}^\infty |\al_{kl}| r^{l}\quad(r>0,\,k\in\Z_+).
$$

We claim that the linear isomorphism $\cR(\CC^2_q)\to\cR(\Om)\otimes \CC[u]$ defined in~\eqref{Riso} extends to a topological isomorphism $\mathcal{F}(\CC^2_q)\to \cO(\CC^2_q)^{\mathrm{PI}}$. It suffices to check the coincidence of the topologies on $\cR(\CC^2_q)$ and $\cR(\Om)\otimes \CC[u]$.

Every $a$ in $\cR(\CC^2_q)$ can be written in the following form:
$$
a=\sum_j\Bigl(\sum_{i\ge 0}\be_{ij}u^jx^i+\sum_{i>0}\ga_{ij}y^iu^j\Bigr).
$$
Put also $\ga_{0j}=\be_{0j}$. Theorem~\ref{AMquplPI} implies that $\cO(\CC^2_q)^{\mathrm{PI}}\cong\cO(\Om)\ptn\CC[[u]]$. It is easy to see that the
topology on the latter space can be determined by the following family of norms: 
$$|a|'_{r,k}= \sum_i|\be_{ik}|r^i, \quad|a|''_{r,l}=\sum|\ga_{il}|r^i \quad(r>0,\,k,l\in\Z_+).$$

Since $(xy)^j=q^{j(j+1)/2}y^jx^j$ 
, we have that
$$
a=\sum \be_{ij}\,q^{j(j+1)/2}y^{j}x^{i+j}+ \sum\ga_{ij}\,q^{j(j+1)/2}y^{i+j}x^i.
$$
Hence,
$$
\|a\|'_{r,l}=\sum_{k\ge l} |\be_{k-l,l}\,q^{l(l+1)/2}|\,r^k+ \sum_{k=1}^{l}|\ga_{k,l-k}\,q^{(l-k)(l-k+1)/2}|\,r^k
$$
and
$$
\|a\|''_{r,k}=\sum_{l\ge k} |\be_{l-k,k}\,q^{k(k+1)/2}|\,r^l+ \sum_{l=1}^{k}|\ga_{l,k-l}\,q^{(k-l)(k-l+1)/2}|\,r^l.
$$

These formulas  obviously imply that $\|\cdot\|'_{r,l} $ and  $\|\cdot\|''_{r,k} $ are majorized by the family $\{|\cdot|'_{r,k},\,|\cdot|''_{r,l}\}$ (since $|q|<1$). An estimate in the reverse direction follows from the facts that the second sums in the above formulas are finite and $q\ne0$.
\end{rem}

\end{document}